\definecolor{linkcolor}{RGB}{15,15,175}
\definecolor{alarmcolor}{RGB}{175,15,15}
\def\B{\mathbb{B}}
\def\R{\mathbb{R}}
\def\N{\mathbb{N}}
\def\uni{\mathcal{U}} 
\def\diag#1{\mathrm{diag}\left[ #1 \right]}
\def\tr#1{\mathrm{tr}\left[ #1 \right]}
\def\abs#1{\left\vert #1 \right\vert}
\def\det#1{\mathrm{det}\left[ #1 \right]}
\def\set#1{\left\lbrace #1 \right\rbrace}
\def\norm#1{\left\Vert #1 \right\Vert}
\def\card#1{\left\vert #1 \right\vert}
\def\t{^\intercal}
\def\v#1{\bm{#1}}
\def\m#1{\bm{\mathrm{#1}}}
\def\eqdef{:=}
\def\ind{\mathds{1}}
\def\logit{\ell}
\newcommand{\prob}[1]{\mathbb{P}\left(#1 \right)}
\newcommand{\probplus}[2]{\mathbb{P}_{#1}\left(#2 \right)}
\newcommand{\evplus}[2]{\mathbb{E}_{#1}\left(#2 \right)}
\def\LinQu{{\scriptscriptstyle\text{LinQu}}}
\def\ExpQu{{\scriptscriptstyle\text{ExpQu}}}
\def\LogCo{{\scriptscriptstyle\text{LogCo}}}
\def\Prod{{\scriptscriptstyle\text{Prod}}}
\def\Gau{{\scriptscriptstyle\text{GauC}}}
\def\Poi{{\scriptscriptstyle\text{Poi}}}
\newtheorem{theorem}{Theorem}[section]
\newtheorem{proposition}[theorem]{Proposition}
\newtheorem{corollary}[theorem]{Corollary}
\newenvironment{remark}[1][Remark]{\begin{trivlist}
\item[\hskip \labelsep {\bfseries #1}]}{\end{trivlist}}
\def\keywords#1{\par\addvspace\medskipamount{\rightskip=0pt plus1cm
\def\and{\ifhmode\unskip\nobreak\fi\ $\cdot$
}\noindent \textbf{Keywords}\enspace\ignorespaces#1\par}}
\author{Christian Sch\"afer$^{1,2}$}
\def\crest{
\footnotetext[1]{Centre de Recherche en Économie et Statistique, 3 Avenue Pierre Larousse, 92240 Malakoff, France}
\footnotetext[2]{CEntre de REcherches en MAthématiques de la DEcision, Université Paris-Dauphine, Place du Maréchal de Lattre de Tassigny
75775 Paris, France}
}
\title{Parametric families for Monte Carlo on binary spaces}
\begin{document}

\maketitle
\crest
\thispagestyle{empty}

\begin{abstract}
In the context of adaptive Monte Carlo algorithms, we cannot directly generate independent samples from the distribution of interest but use a proxy which we need to be close to the target. Generally, such a proxy distribution is a parametric family on the sampling spaces of the target distribution. For continuous sampling problems in high dimensions, we often use the multivariate normal distribution as a proxy for we can easily parametrize it by its moments and quickly sample from it. The objective is to construct similarly flexible parametric families on binary sampling spaces too large for exhaustive enumeration.
\keywords{Binary parametric families \and Sampling correlated binary data}
\end{abstract}

\section{Introduction}
\subsection{Parametric families for Monte Carlo}
We discuss parametric families on binary spaces against the backdrop of Monte Carlo applications. The construction of binary parametric families $q_{\theta}$ that can model and reproduce the dependence structure of the target distribution $\pi$ is a difficult task, and many concepts of modeling multivariate binary data fail to provide parametric families that are suitable for adaptive Monte Carlo algorithms. Therefore, we do not only discuss workable families but also approaches that are impractical in order to provide a thorough review of all available methods.

\subsection{Notation}
We denote scalars in italic type, vectors in italic bold type and matrices in straight bold type. We write $\diag{\v a}$ for the diagonal matrix of the vector $\v a$ and $\diag{\m A}$ for the main diagonal of the matrix $\m A$. The determinant is denoted by $\det{\m A}$. We write $a_{i\bullet}$ and $a_{\bullet j}$ for the $i$th row and $j$th column of $\m A$, respectively. We write $\m A\succ0$ to indicate that $\m A$ is positive definite. Given as set $M$, we write $\card M$ for the number of its elements, $\overline M$ for its closure and $\ind_M$ for its indicator function.

We write $\B=\set{0,1}$ for the binary space and denote by $d\in\N$ the generic dimension. Given a vector $\v\gamma\in\B^d$ and an index set $I\subseteq\set{1,\dots,d}$, we write $\v\gamma_I\in\B^{\card I}$ for the sub-vector indexed by $I$ and $\v\gamma_{-I}\in\B^{d-\card I}$ for its complement. If $I$ is a sequence $\set{i,\dots,j}$ we use the more explicit notation $\v\gamma_{i:j}$ instead of $\v\gamma_I$ and $\v\gamma_i$ if $I=\set i$.

We write $\v\gamma_{I_1}$ and $\v\gamma_{I_0}$ for $\v\gamma$ with its components indexed by $I$ set to $\v 1$ and $\v 0$, respectively. In particular, we frequently use the short notation $\v a_{i\bullet}\t\v\gamma_{i_1}$ for $a_{ii}+\sum_{j=1}^{i-1} a_{ij}\gamma_j$ where $\m A$ is a lower triangular matrix.

\subsection{Data from the target distribution}
In the sequel, let $d>0$ denote the dimension of the binary space $\B^d=\set{0,1}^d$. Adaptive Monte Carlo algorithms are generally able to produce a, not necessarily independent and possibly weighted, sample
\begin{equation*}
\v w=(w_1,\dots,w_n)\in[0,1]^n,\quad \m X=(\v x_1,\dots,\v x_n)\t\in\B^{n\times d}
\end{equation*}
from the target distribution $\pi$ we want to emulate using a binary family. We define the index set $D=\set{1,\dots,d}$ and denote by
\begin{equation}
\label{eq:sample mean}
\textstyle\bar x_i\eqdef\sum_{k=1}^n w_k x_{k,i},\quad \bar x_{i,j}\eqdef\sum_{k=1}^n w_k x_{k,i}x_{k,j},\quad i,j\in D
\end{equation}
the weighted first and second sample moments. We further define by\vspace{-2mm}
\begin{equation}
\label{eq:sample corr}
r_{i,j}\eqdef\frac{\bar x_{i,j}-\bar x_i\bar x_j}{\sqrt{\bar x_i(1-\bar x_i)\bar x_j(1-\bar x_j)}},\qquad i,j\in D.
\end{equation}
the weighted sample correlation.

\subsection{Suitable parametric families}
\label{sec:properties}
We first frame some properties making a parametric family suitable as sampling distribution in adaptive Monte Carlo algorithms.
\begin{enumerate}[(a)]
\item For reasons of parsimony, we want to construct a family of distributions with at most $\mathrm{dim}(\theta)\leq d(d+1)/2$ parameters.
\item Given a sample $\m X=(\v x_1,\dots,\v x_n)\t$ from the target distribution $\pi$, we need to estimate $\theta^*$ such that the binary family $q_{\theta^*}$ is close to $\pi$.
\item We need to generate samples $\m Y=(\v y_1,\dots, \v y_m)\t$ from the family $q_\theta$. We need the rows of $\m Y$ to be independent.
\item For some algorithms, we need to evaluate the probability $q_\theta(\v y)$. For instance, we need $q_\theta(\v y)$ to compute importance weights or acceptance ratios in the context of Importance Sampling or Markov chain Monte Carlo, respectively.
\item Analogously to the multivariate normal, we need our calibrated binary family $q_{\theta^*}$ to reproduce the marginals and covariance structure of $\pi$.
\end{enumerate}

%


%

\section{Distributions on binary spaces}
Before we embark on the discussion of binary families, we make some observations which hold true for every binary distribution. The notation and results introduced in this section will be used throughout the rest of this work. Here, we denote by $\pi$ some generic distribution on $\B^d$

\paragraph{Moments}
We use the short notation,
\begin{align*}
\textstyle
u_I(\v\gamma)\eqdef \prod_{i\in I}\gamma_i,\qquad I\subseteq D,
\end{align*}
for the product of all components index by $I$ with $\prod_{i\in \emptyset}=1$. Since $u_I(\v\gamma)=1$ iff $\v\gamma_i=1$ for all $i\in I$, $u_I$ is the indicator function for the unit vector $\v 1_{\abs I}$. We can characterize every distribution on $\B^d$ by $2^d-1$ full probabilities
\begin{equation*}
p_I\eqdef\probplus{\pi}{\v\gamma_I=1,\v\gamma_{D\setminus I}=0},\qquad I\subseteq D
\end{equation*}
or by $2^d-1$ cross-moments, that is marginal probabilities,
\begin{equation*}
\textstyle
m_I
\eqdef\evplus{\pi}{u_I(\v\gamma)}=\probplus{\pi}{\v\gamma_I=\v 1},\qquad  I\subseteq D.
\end{equation*}
In the following, we assume that $m_i\in(0,1)$ for all $i\in D$, since for $m_i\in\set{0,1}$, the component $\gamma_i=m_i$ is constant and therefore not part of the sampling problem. 

For the product of components normalized to have zero mean and unit variance, we write
\begin{align*}
\textstyle
v_I(\v\gamma)\eqdef \prod_{i\in I}(\gamma_i-m_i)/\sqrt{m_i(1-m_i)},\qquad I\subseteq D.
\end{align*}
Note that $\evplus{\pi}{v_{i,j}}$ is the correlation between $\gamma_i$ and $\gamma_j$. Therefore, we call
\begin{equation*}
c_I\eqdef\evplus{\pi}{v_I(\v\gamma)}
\end{equation*}
the correlation of order $\abs I$.

\paragraph{Marginals}
We use the notation
\begin{equation*}
\pi_I(\v\gamma_I)=\textstyle\sum_{\v\xi\in\B^{d-\abs I}} \pi(\v\gamma_I, \v \xi), \qquad I\subseteq D.
\end{equation*}
for the marginal distributions. Note the connection to the cross-moments
\begin{equation}
\label{eq:marginals and moments}
\pi_I(\v 1_{\abs I})
\textstyle=\textstyle\sum_{\v\xi\in\B^{d-\abs I}} \pi(\v 1_{\abs I}, \v \xi)
\textstyle=\sum_{\v \gamma\in\B^d} u_I(\v\gamma)\ \pi(\v\gamma)=m_I.
\end{equation}

\paragraph{Representations}
Let $\pi$ be the mass function of a binary distribution and suppose there is a bijective mapping $\tau\colon \R \supseteq  V\to\pi(\B^d)$. There are coefficients $a_{I}\in\R$ such that
\begin{align}
\label{eq:representations}
\textstyle\pi(\v\gamma)=\tau\left[\sum_{I\subseteq D} a_{I} \prod_{i\in I}\gamma_i\right].
\end{align}
\begin{proof}
Immediate from the representation of the Dirac delta function as a product,
\begin{align*}
\textstyle\pi(\v\gamma)=\tau\left[\sum_{I\subseteq D}\delta_{\kappa^{I}}(\v \gamma)\tau^{-1}(\pi(\kappa^{I}))\right],\quad 
\delta_{\kappa^{I}}(\v \gamma)=\prod_{i\in I}\gamma_i\prod_{i\in \set{1,\dots,d}\setminus I}(1-\gamma_i),
\end{align*}
where $\kappa^{I}$ denotes the vector with $\kappa^{I}_i=\ind_I(i)$ for all $i\in\set{1,\dots,d}$.
\end{proof}

\paragraph{Constraints}
The general constraints on binary data are
\begin{equation}
\label{eq:bin bounds}
\textstyle
\left(\sum_{i\in I}m_i-\vert I\vert+1\right)\vee 0
\leq m_I
\leq \min\set{m_K\mid K\subseteq I},
\end{equation}
where the upper bound is the monotonicity of the measure, and the lower bound follows from
\begin{align*}
\abs I -1
&\textstyle =\sum_{\v\gamma\in\m\B^d}(\vert I\vert-1)\pi(\v\gamma) \\
&\textstyle \geq\sum_{\v\gamma\in\m\B^d}\left(\sum_{i\in I}\gamma_i-u_I(\gamma)\right)\pi(\v\gamma) \\
&\textstyle =\sum_{i\in I}m_i-m_I.
\end{align*}
In fact, $m_I$ is a $\abs I$-dimensional copula with respect to the expectations $m_i$ for $i\in I$, see \citet[p.45]{nelsen2006introduction}, and the inequalities (\ref{eq:bin bounds}) correspond to the Fr\'echet-Hoeffding bounds.

\paragraph{Sampling}
For sampling from a binary distribution $\pi$, we apply the chain rule factorization
\begin{equation}
\begin{split}
\label{eq:chain rule factorization}
\pi(\v\gamma)
&\textstyle =\pi_{\set 1}(\v\gamma_1)\prod_{i=2}^d \pi_{\set{1:i}}(\v\gamma_i\mid\v\gamma_{1:i-1}) \\
&\textstyle =\pi_{\set 1}(\v\gamma_1)\prod_{i=2}^d \pi_{\set{1:i-1}}(\v\gamma_{1:i-1})/\pi_{\set{1:i}}(\v\gamma_{1:i}),
\end{split}
\end{equation}
which permits to sample a random vector component-wise, conditioning on the entries we already generated. We do not even need to compute the full decomposition \eqref{eq:chain rule factorization}, but only the conditional probabilities $\pi_{\set{1:i}}(\gamma_i=1\mid\v\gamma_{1:i-1})$ defined by
\begin{equation}
\label{eq:cond probabilities}
\frac{\pi_{\set{1:i}}(\v\gamma_{1:i-1},1)}
{\pi_{\set{1:i}}(\v\gamma_{1:i-1},1)+\pi_{\set{1:i}}(\v\gamma_{1:i-1},0)}.
\end{equation}
The full probability $\pi(\v\gamma)$ is then computed as a by-product of Procedure \ref{algo:sampling}.

\begin{algorithm}[H]
\caption{Sampling via chain rule factorization}
\label{algo:sampling}
\begin{algorithmic}
\STATE $\v y=(0,\dots,0),\ p\gets 1$
\FOR {$i=1\dots,d$}\vspace{0.2em}
  \STATE $r\gets \pi_{\set{1:i}}(\gamma_i=1\mid\gamma_{1:i-1})$ \\[0.2em]
  \STATE \textbf{sample } $u\sim\uni_{[0,1]},\ y_i\gets\ind_{[0,r]}(u)$ \\[0.2em]
  \STATE $p\gets\begin{cases}
	    p\cdot r    & \textbf{if }\ \ y_i=1 \\
	    p\cdot (1-r) & \textbf{if }\ \ y_i=0
            \end{cases}$ \\[0.2em]
\ENDFOR
\RETURN $\v y,\ p$
\end{algorithmic}
\end{algorithm}

%


%

\section{The product family}
The simplest non-trivial distributions on $\B^d$ are certainly those having independent components.

\subsection{Definition}
For a vector $\m m\in(0,1)^d$ of marginal probabilities, we define the product family
\begin{equation}
\begin{split}
\label{eq:product family}
q_{\v m}^\Prod(\v \gamma)
&\textstyle \eqdef \prod_{i\in D} m_i^\gamma(1-m_i)^{1-\gamma} \\
&\textstyle \stackrel{\quad\ }{=} \prod_{i\in D} (1-m_i)\exp\left(\sum_{i\in D} \logit(m_i) \right).
\end{split}
\end{equation}
The second representation using the logit function
\begin{equation}
\label{eq:logit}
\logit\colon(0,1)\to\R,\quad \logit(p)=\log p- \log(1-p)
\end{equation}
is useful to identify the product family as special case of more complex families.

\subsection{Properties}
We check the requirement list from Section \ref{sec:properties}:
\begin{enumerate}[(a)]
\item The product family is parsimonious with $\mathrm{dim}(\theta)=d$.
\item The maximum likelihood estimator $\v m^*$ is the sample mean \eqref{eq:sample mean}.
\item We easily sample from $q_{\v m}^\Prod$, since \eqref{eq:chain rule factorization} holds trivially.
\item We easily evaluate the probability of a product of independent components.
\item \color{alarmcolor} The family $q_{\v m}^\Prod$ does not reproduce dependencies we might observe in the data $\m X$.
\end{enumerate}
The last point is a weakness which makes this simple family impractical when adaptive Monte Carlo algorithms are applied to challenging sampling problems. The product family $q_{\v m}^\Prod$ is might often fail to mimick the target distribution $\pi$ sufficiently well. Therefore, the rest of this paper deals with ideas on how to sample binary vectors with a given dependence structure. 

\subsection{Beyond the product family}
There are, to our knowledge, two main strategies to produce binary vectors with correlated components.
\begin{enumerate}[(1)]
\item We can construct a generalized linear family which permits computation of its marginal distributions. We apply the chain rule factorization \eqref{eq:chain rule factorization} and write $q_\theta$ as
\begin{equation}
\textstyle
q_{\theta}(\v\gamma)=q_{\theta}(\v\gamma_1)\prod_{i=2}^d q_{\theta}(\v\gamma_i\mid\v\gamma_{1:i-1}),
\end{equation}
which allows us to sample vectors component-wise.
\item We sample from a multivariate auxiliary distribution $h_{\theta}$ dichotomize the samples, that is map them into $\B^d$. We call
\begin{equation}
\textstyle
\label{eq:copula family}
q_{\theta}(\v\gamma)=\int_{\tau^{-1}(\v \gamma)} h_\theta(\v v) d\v v
\end{equation}
a copula family since we exploit the copula structure of the underlying distribution to build a new parametric family. However, we refrain from working with explicit uniform marginals which is not all necessary \citep{mikosch2006copulas}.
\end{enumerate}
In the following, we first study a few generalized linear families and then review a some copula approaches.

%


%
\section{The linear quadratic family}
Taking $\tau$ the identity mapping in \eqref{eq:representations}, we obtain a full linear representation
\begin{equation*}
\textstyle \pi(\v\gamma)=\sum_{I\subseteq D}a_I\ u_I(\v\gamma).
\end{equation*}
However, we cannot give a useful interpretation of the coefficients $a_I$. \citet{bahadur61representation} derived the following representation:
\begin{proposition}
\label{prop:bahadur}
We can write any binary distribution as
\begin{equation*}
\pi(\v\gamma)=q^\Prod_{\v m}(\v\gamma)\, (\textstyle\sum_{I\subseteq D} v_I(\v\gamma) \ c_I),
\end{equation*}
where $\v m=(m_1,\dots,m_d)$ are the marginal probabilities.
\end{proposition}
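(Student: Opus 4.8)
The plan is to divide the claimed identity through by the product mass function and recognise what remains as an orthonormal expansion. Since we assume $m_i\in(0,1)$ for every $i\in D$, the mass function $q^\Prod_{\v m}(\v\gamma)=\prod_{i\in D}m_i^{\gamma_i}(1-m_i)^{1-\gamma_i}$ is strictly positive on all of $\B^d$, so $f(\v\gamma)\eqdef\pi(\v\gamma)/q^\Prod_{\v m}(\v\gamma)$ is a well-defined real function on $\B^d$ and the assertion becomes $f(\v\gamma)=\sum_{I\subseteq D}c_I\,v_I(\v\gamma)$.

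First I would equip the $2^d$-dimensional real vector space of functions $\B^d\to\R$ with the inner product $\langle g,h\rangle\eqdef\evplus{q^\Prod_{\v m}}{g(\v\gamma)\,h(\v\gamma)}$, that is, expectation of the product under the independent-components distribution $q^\Prod_{\v m}$; positive definiteness is exactly the strict positivity just noted. Writing $Z_i\eqdef(\gamma_i-m_i)/\sqrt{m_i(1-m_i)}$ so that $v_I(\v\gamma)=\prod_{i\in I}Z_i$, the coordinates $Z_i$ are independent under $q^\Prod_{\v m}$ with $\evplus{q^\Prod_{\v m}}{Z_i}=0$ and $\evplus{q^\Prod_{\v m}}{Z_i^2}=1$. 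The key step is the orthonormality computation: for $I,J\subseteq D$,
\[
\langle v_I,v_J\rangle
=\evplus{q^\Prod_{\v m}}{\textstyle\prod_{i\in I}Z_i\,\prod_{j\in J}Z_j}
=\prod_{k\in I\triangle J}\evplus{q^\Prod_{\v m}}{Z_k}\ \prod_{k\in I\cap J}\evplus{q^\Prod_{\v m}}{Z_k^2},
\]
which equals $1$ when $I=J$ and vanishes otherwise, since any index in the symmetric difference $I\triangle J$ contributes a single centred factor $Z_k$ that kills the expectation. Hence $\set{v_I\mid I\subseteq D}$ is an orthonormal family of $2^d$ elements, therefore an orthonormal basis.

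Next I would expand $f$ in this basis, $f=\sum_{I\subseteq D}\langle f,v_I\rangle\,v_I$, and read off the coefficients: the weight cancels, $\langle f,v_I\rangle=\sum_{\v\gamma\in\B^d}q^\Prod_{\v m}(\v\gamma)\,f(\v\gamma)\,v_I(\v\gamma)=\sum_{\v\gamma\in\B^d}\pi(\v\gamma)\,v_I(\v\gamma)=\evplus{\pi}{v_I(\v\gamma)}=c_I$, and multiplying $f(\v\gamma)=\sum_{I\subseteq D}c_I v_I(\v\gamma)$ back by $q^\Prod_{\v m}(\v\gamma)$ gives the representation. The only genuine content is the orthonormality identity, which rests entirely on independence of the components under $q^\Prod_{\v m}$ together with the standardisation of each $Z_i$; everything after that is the routine uniqueness of coordinates in a finite-dimensional inner-product space. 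As a sanity check one notes $c_\emptyset=1$ (empty product) and $c_i=\evplus{\pi}{Z_i}=0$, so the order-$0$ and order-$1$ terms collapse to the constant $1$, reflecting that $\pi$ and $q^\Prod_{\v m}$ share the marginals $m_i$.
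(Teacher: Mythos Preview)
Your proof is correct and follows essentially the same route as the paper: both equip the function space on $\B^d$ with the inner product $\langle g,h\rangle=\evplus{q^\Prod_{\v m}}{g\,h}$, verify that $\{v_I\}_{I\subseteq D}$ is orthonormal via the independence and standardisation of the coordinates under $q^\Prod_{\v m}$, and then read off the expansion coefficients of $\pi/q^\Prod_{\v m}$ as the $c_I$. Your treatment is arguably a touch cleaner in that you include $v_\emptyset=1$ in the orthonormal family from the outset, obtaining $2^d$ basis elements directly rather than adjoining the constant function separately.
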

\begin{proof}
For convenience, we provide the proof of \citeauthor{bahadur61representation} in Appendix \ref{proof:bahadur}.
\end{proof}
This decomposition, first discovered by Lazarsfeld, is a special case of a more general interaction theory \citep{streitberg1990lancaster} and allows for a reasonable interpretation of the parameters. Indeed, we have a product family times a correction term $1+\sum_{I\in\mathcal{I}_k} v_I(\v\gamma) \ c_I$ where the coefficients are higher order correlations.

\subsection{Definition}
We can try to construct a more parsimonious family by removing higher order interaction terms. For additive approaches, however, we face the problem that a truncated representations do not necessarily define probability distributions since they might not be non-negative.

Still, for a symmetric matrix $\m A$, we define the $d(d+1)/2$ parameter family
\begin{equation}
\label{eq:linear family}
q_{\m A,a_0}^\LinQu(\v\gamma)=\mu(a_0 + \v\gamma\t\m A\v\gamma),
\end{equation}
where $\mu>0$ is a normalizing constant and we set $a_0=-(\min_{\v\gamma\in\B^d} \v\gamma\t\m A\v\gamma\wedge 0$). Since $a_0$ is the solution of an NP hard quadratic unconstrained binary optimization problem, this definition is of little practical value.

\subsection{Moments}
In virtue of the linear structure, we can derive explicit expressions for the cross-moments and marginal distributions, explicit meaning that the complexity is polynomial in $d$. The proofs are basic but rather tedious, so we moved them to the appendix section. 

Next, we give a general formula yielding all cross-moments, including the normalizing constant. 
\begin{proposition}
\label{prop:cross moments}
For a set of indices $I\subseteq D$, we can write the corresponding cross-moment as
\begin{equation*}
m_I=\frac{1}{2^{\abs I}}+
\frac{
\sum_{i\in I} \left\lbrack 2\sum_{j\in D} a_{i,j}+\sum_{j\in I\setminus\set i}^d a_{i,j} \right\rbrack
}{
2^{\abs I}(4a_0+\v 1\t \m A \v 1 + \tr{\m A})
}.
\end{equation*}
For a proof see Appendix \ref{proof:cross moments}
\end{proposition}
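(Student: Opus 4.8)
The plan is to evaluate $m_I = \sum_{\v\gamma\in\B^d} u_I(\v\gamma)\,q_{\m A,a_0}^\LinQu(\v\gamma) = \mu\sum_{\v\gamma\in\B^d} u_I(\v\gamma)\,(a_0 + \v\gamma\t\m A\v\gamma)$ directly from the definition, using two facts: $u_I(\v\gamma) = \prod_{i\in I}\gamma_i$ equals $1$ exactly on the $2^{d-\abs I}$ vectors with $\v\gamma_I = \v1$, and $\gamma_i^2 = \gamma_i$ on $\B$. Beyond bookkeeping the only ingredients are the elementary counts $\sum_{\v\gamma\in\B^m}\gamma_i = 2^{m-1}$ and $\sum_{\v\gamma\in\B^m}\gamma_i\gamma_j = 2^{m-2}$ for $i\neq j$, which combine into $\sum_{\v\gamma\in\B^m}\v\gamma\t\m B\v\gamma = 2^{m-2}\bigl(\tr{\m B} + \v1\t\m B\v1\bigr)$ for any symmetric $\m B$ (with the usual conventions in the degenerate cases $m\le 1$).

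First I would pin down the normalizing constant: taking $I=\emptyset$ in the identity above gives $\mu^{-1} = \sum_{\v\gamma\in\B^d}(a_0 + \v\gamma\t\m A\v\gamma) = 2^{d-2}\bigl(4a_0 + \tr{\m A} + \v1\t\m A\v1\bigr)$, which is already the bracket in the denominator of the claim. Next, decompose $\m A$ into the blocks $\m A_{II}$, $\m A_{I,-I}$, $\m A_{-I,-I}$ relative to $I$ and $D\setminus I$. Writing $\v\gamma = (\v1_{\abs I},\v\xi)$ with $\v\xi\in\B^{d-\abs I}$ and expanding $\v\gamma\t\m A\v\gamma = \v1\t\m A_{II}\v1 + 2\,\v1\t\m A_{I,-I}\v\xi + \v\xi\t\m A_{-I,-I}\v\xi$, the $\v\xi$-sum of the first term is trivial, the second uses $\sum_{\v\xi}\v\xi = 2^{d-\abs I-1}\v1$, and the third uses the same identity in dimension $d-\abs I$, so that
\[
\sum_{\v\gamma:\,\v\gamma_I=\v1}\bigl(a_0 + \v\gamma\t\m A\v\gamma\bigr)
= 2^{d-\abs I}\Bigl(a_0 + \v1\t\m A_{II}\v1 + \v1\t\m A_{I,-I}\v1 + \tfrac14\tr{\m A_{-I,-I}} + \tfrac14\v1\t\m A_{-I,-I}\v1\Bigr).
\]
Multiplying by $\mu$ and using $2^{d-\abs I}\mu = \bigl(2^{\abs I-2}(4a_0+\tr{\m A}+\v1\t\m A\v1)\bigr)^{-1}$ exhibits $m_I$ as a single fraction over $2^{\abs I}(4a_0+\tr{\m A}+\v1\t\m A\v1)$.

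The last step is pure algebra. Split off $1/2^{\abs I}$ by writing it as $(4a_0+\tr{\m A}+\v1\t\m A\v1)$ over the same denominator; using $\tr{\m A} = \tr{\m A_{II}} + \tr{\m A_{-I,-I}}$ and $\v1\t\m A\v1 = \v1\t\m A_{II}\v1 + 2\,\v1\t\m A_{I,-I}\v1 + \v1\t\m A_{-I,-I}\v1$ (symmetry of $\m A$), all $(D\setminus I)$-block terms cancel, and what remains to prove is $3\,\v1\t\m A_{II}\v1 + 2\,\v1\t\m A_{I,-I}\v1 - \tr{\m A_{II}} = \sum_{i\in I}\bigl(2\sum_{j\in D}a_{ij} + \sum_{j\in I\setminus\{i\}}a_{ij}\bigr)$. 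This holds because $\sum_{i\in I}\sum_{j\in D}a_{ij} = \v1\t\m A_{II}\v1 + \v1\t\m A_{I,-I}\v1$ and $\sum_{i\in I}\sum_{j\in I\setminus\{i\}}a_{ij} = \v1\t\m A_{II}\v1 - \tr{\m A_{II}}$. I expect no conceptual obstacle: the only real care needed is keeping the block bookkeeping straight so that the $(D\setminus I)$-contributions in the numerator cancel precisely against the constant term $1/2^{\abs I}$, and checking the degenerate low-dimensional cases $\abs I\in\{d-1,d\}$ in the intermediate counting identity. Doing the $\v\xi$-sum through the block decomposition, rather than a case analysis over pairs $(k,l)$, is what keeps the computation short.
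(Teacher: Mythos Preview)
Your argument is correct. The block decomposition of $\m A$ along $I$ and $D\setminus I$, together with the identity $\sum_{\v\xi\in\B^m}\v\xi\t\m B\v\xi=2^{m-2}(\tr{\m B}+\v1\t\m B\v1)$, gives the sum over $\set{\v\gamma:\v\gamma_I=\v1}$ in one line, and your final algebraic identity $3\,\v1\t\m A_{II}\v1+2\,\v1\t\m A_{I,-I}\v1-\tr{\m A_{II}}=\sum_{i\in I}\bigl(2\sum_{j\in D}a_{ij}+\sum_{j\in I\setminus\set i}a_{ij}\bigr)$ is exactly right.

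The paper arrives at the same endpoint by a different bookkeeping device: rather than fixing $\v\gamma_I=\v1$ and summing a quadratic form in the free coordinates, it keeps the full sum $\sum_{\v\gamma}u_I(\v\gamma)\,\v\gamma\t\m A\v\gamma=\sum_{i,j}a_{ij}\sum_{\v\gamma}\prod_{k\in I\cup\set{i,j}}\gamma_k$, evaluates the inner sum as $2^{d-\abs{I\cup\set{i,j}}}$ via a small lemma, and then simplifies the resulting double sum through an indicator-function calculus (splitting according to whether $i\in I$, $j\in I\cup\set i$, etc.). Both routes are elementary and yield the identical intermediate expression $3\,\v1\t\m A_{II}\v1+2\,\v1\t\m A_{I,-I}\v1-\tr{\m A_{II}}$; your block approach is more compact and avoids the case analysis on $(i,j)$, while the paper's pairwise approach makes it transparent why the exponent in the counting lemma is exactly $\ind_I(i)+\ind_{I\cup\set i}(j)$, which is useful if one later wants moments indexed by multisets or higher-order interactions.
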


\begin{corollary}
The normalizing constant is
\begin{equation*}
\mu=2^{-d+2}\left(4a_0+\v 1\t \m A \v 1 + \tr{\m A}\right)^{-1},
\end{equation*}
and the expected value is
\begin{equation*}
\evplus{q_{\m A,a_0}^\LinQu}{\gamma_i}=\frac{1}{2}+\frac{\sum_{k=1}^d\, a_{i,k}}{4a_0+\v 1\t \m A \v 1 + \tr{\m A}}.
\end{equation*}
\end{corollary}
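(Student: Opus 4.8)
The corollary has two claims, and both should follow by specializing Proposition \ref{prop:cross moments}. The plan is as follows.

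First I would derive the normalizing constant. The starting point is the observation that the cross-moments $m_I$ produced by Proposition \ref{prop:cross moments} are, by construction, the moments of the normalized (probability) distribution $q_{\m A,a_0}^\LinQu$. But there is a second, unnormalized computation of the same quantities: writing $Z \eqdef \sum_{\v\gamma\in\B^d}(a_0+\v\gamma\t\m A\v\gamma) = 1/\mu$ for the normalizing sum, one has $m_I = \frac{1}{Z}\sum_{\v\gamma\in\B^d} u_I(\v\gamma)(a_0+\v\gamma\t\m A\v\gamma)$. Taking the empty index set $I=\emptyset$ gives $m_\emptyset = 1$ on the left (since $u_\emptyset \equiv 1$ and the measure is a probability), while Proposition \ref{prop:cross moments} with $\abs I = 0$ gives $m_\emptyset = 1 + 0/(4a_0+\v 1\t\m A\v 1 + \tr{\m A}) = 1$, so that route is circular. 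Instead I would read $\mu$ off the denominator directly: the proof of Proposition \ref{prop:cross moments} in the appendix must compute $Z = \sum_{\v\gamma}(a_0 + \v\gamma\t\m A\v\gamma)$, and the natural closed form is $Z = 2^d a_0 + \sum_{\v\gamma}\v\gamma\t\m A\v\gamma$. Splitting $\v\gamma\t\m A\v\gamma = \sum_i a_{ii}\gamma_i + \sum_{i\neq j}a_{ij}\gamma_i\gamma_j$ and using $\sum_{\v\gamma\in\B^d}\gamma_i = 2^{d-1}$, $\sum_{\v\gamma\in\B^d}\gamma_i\gamma_j = 2^{d-2}$ for $i\neq j$, one gets $Z = 2^d a_0 + 2^{d-1}\tr{\m A} + 2^{d-2}(\v 1\t\m A\v 1 - \tr{\m A}) = 2^{d-2}\bigl(4a_0 + \v 1\t\m A\v 1 + \tr{\m A}\bigr)$. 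Hence $\mu = 1/Z = 2^{-d+2}\bigl(4a_0 + \v 1\t\m A\v 1 + \tr{\m A}\bigr)^{-1}$, which is the claimed formula.

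Second, the expected value $\evplus{}{\gamma_i}$ is simply $m_{\set i}$, i.e.\ Proposition \ref{prop:cross moments} with $I = \set i$ a singleton. Then $\abs I = 1$, so $1/2^{\abs I} = 1/2$; the inner bracket $2\sum_{j\in D}a_{i,j} + \sum_{j\in I\setminus\set i}a_{i,j}$ collapses because $I\setminus\set i = \emptyset$, leaving $2\sum_{j\in D}a_{i,j} = 2\sum_{k=1}^d a_{i,k}$; and the outer sum over $i\in I$ has the single term. Dividing by $2^{\abs I}(4a_0 + \v 1\t\m A\v 1 + \tr{\m A}) = 2(4a_0 + \v 1\t\m A\v 1 + \tr{\m A})$ gives $m_{\set i} = \tfrac12 + \frac{\sum_{k=1}^d a_{i,k}}{4a_0 + \v 1\t\m A\v 1 + \tr{\m A}}$, exactly as stated. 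A consistency cross-check is that the constant from the first part is precisely $2^{-\abs I}$ times the reciprocal of the denominator appearing in Proposition \ref{prop:cross moments}, which is what makes the whole corollary essentially a bookkeeping exercise.

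I do not expect any real obstacle here: everything reduces to counting how many vertices of the hypercube have a given coordinate (or pair of coordinates) equal to one. The only point requiring minor care is the handling of the diagonal versus off-diagonal terms of the symmetric matrix $\m A$ when expanding $\v\gamma\t\m A\v\gamma$ — getting the coefficient $2^{d-1}$ for diagonal entries and $2^{d-2}$ for off-diagonal entries right — and then recognising the combination $2^d a_0 + 2^{d-1}\tr{\m A} + 2^{d-2}(\v 1\t\m A\v 1 - \tr{\m A})$ as a common multiple of $2^{d-2}$. Since both formulas are just evaluations of an already-established proposition, I would keep the proof to a few lines and, as the authors do elsewhere, defer any truly tedious expansion to the appendix where Proposition \ref{prop:cross moments} itself is proved.
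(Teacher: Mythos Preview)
Your proposal is correct and essentially matches the paper's approach. In the paper, the formula for $\mu$ is obtained inside the proof of Proposition~\ref{prop:cross moments} by specializing the intermediate, \emph{unnormalized} expression $m_I=\mu\,2^{d-\abs I-2}\bigl[4a_0+\v 1\t\m A\v 1+\tr{\m A}+\cdots\bigr]$ to $I=\emptyset$ and using $m_\emptyset=1$; your direct count of $Z=\sum_{\v\gamma}(a_0+\v\gamma\t\m A\v\gamma)$ via the diagonal/off-diagonal split is the same arithmetic reached without the lemma machinery, and the expected-value part is, as you say, just the $I=\set i$ specialization in both treatments.
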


\noindent
The mean $m_i$ is close to $1/2$ unless the row $\v a_i$ dominates the matrix. Therefore, if $\m A$ is non-negative definite, the marginal probabilities $m_i$ can hardly take values at the extremes of the unit interval.

\subsection{Marginals}
For the marginal distributions
\begin{equation*}
q_{\m A, a_0}^{(1:k)}(\v\gamma_{1:k})=\textstyle\sum_{\v \xi\in\B^{d-(k+1)}}q_{\m A, a_0}(\v\gamma_{1:k},\v \xi)\
\end{equation*}
there are explicit and recursive formulas. Hence, we can compute the chain rule decomposition \eqref{eq:chain rule factorization} which in turn allows to sample from the family.
\begin{proposition}
\label{prop:marginals}
For the marginal distribution holds
\begin{align*}
q_{\m A, a_0}^{(1:k)}(\v\gamma_{1:k})=\mu 2^{d-k-2}s_k(\v\gamma_{1:k}),
\end{align*}
where
\begin{align*}
\textstyle
s_k(\v\gamma_{1:k})=4a_0
&\textstyle+\sum_{i=1}^{k}\gamma_i\,\left(\sum_{j=1}^{k}\gamma_j a_{i,j}+ \sum_{j=k+1}^{d} a_{i,j}\right) \\
&\textstyle+\sum_{i=k+1}^d \sum_{j=k+1}^d a_{i,j} + \sum_{i=k+1}^d a_{i,i}.
\end{align*}
For a proof see Appendix \ref{proof:marginals}
\end{proposition}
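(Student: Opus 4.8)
The plan is to compute the marginal by brute-force summation,
\[
q_{\m A,a_0}^{(1:k)}(\v\gamma_{1:k})
=\mu\sum_{\v\xi\in\B^{d-k}}\bigl(a_0+(\v\gamma_{1:k},\v\xi)\t\m A\,(\v\gamma_{1:k},\v\xi)\bigr),
\]
exploiting that for binary $\v\xi$ the sum over $\B^{d-k}$ collapses to elementary counting. The normalizing constant $\mu$ is the one already identified in the preceding corollary, so it merely rides along.

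First I would expand the quadratic form into blocks, according to whether an index is $\le k$ (a retained coordinate $\gamma_i$) or $>k$ (a marginalised coordinate $\xi_j$), merging the two cross blocks by symmetry of $\m A$:
\[
(\v\gamma_{1:k},\v\xi)\t\m A\,(\v\gamma_{1:k},\v\xi)
=\sum_{i,j\le k}a_{i,j}\gamma_i\gamma_j
+2\sum_{i\le k<j}a_{i,j}\gamma_i\xi_j
+\sum_{i,j>k}a_{i,j}\xi_i\xi_j.
\]
Then I would sum each block over $\v\xi\in\B^{d-k}$. The constant $a_0$ and the first block are $\v\xi$-free and hence pick up the factor $2^{d-k}$. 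Every monomial of the cross block contains exactly one free variable $\xi_j$, with $\sum_{\v\xi}\xi_j=2^{d-k-1}$, so that block also contributes with weight $2^{d-k}$ once the leading $2$ is absorbed. In the last block one must split off the diagonal, where $\xi_i^2=\xi_i$ carries weight $2^{d-k-1}$, from the genuine bilinear terms $\xi_i\xi_j$ with $i\ne j$, which carry weight $2^{d-k-2}$; using $\sum_{i\ne j>k}a_{i,j}=\sum_{i,j>k}a_{i,j}-\sum_{i>k}a_{i,i}$ these recombine into $2^{d-k-2}\bigl(\sum_{i,j>k}a_{i,j}+\sum_{i>k}a_{i,i}\bigr)$.

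Collecting everything and factoring out the common $\mu\,2^{d-k-2}$ leaves a polynomial in $\v\gamma_{1:k}$, which after grouping the $j\le k$ and $j>k$ sums inside a single sum over $i\le k$ is the claimed $s_k$. I would pin down the powers of two with two sanity checks: setting $k=d$ must reproduce the defining density $q_{\m A,a_0}^\LinQu$, and $\sum_{\v\gamma_{1:k}\in\B^k}q_{\m A,a_0}^{(1:k)}(\v\gamma_{1:k})$ must equal $1$ after substituting the explicit $\mu$. A cleaner alternative, matching the recursive formulas mentioned in the text, is downward induction from $k=d$ via $q_{\m A,a_0}^{(1:k)}(\v\gamma_{1:k})=q_{\m A,a_0}^{(1:k+1)}(\v\gamma_{1:k},0)+q_{\m A,a_0}^{(1:k+1)}(\v\gamma_{1:k},1)$, which reduces the whole statement to verifying the one-line identity $s_k(\v\gamma_{1:k})=\tfrac12\bigl(s_{k+1}(\v\gamma_{1:k},0)+s_{k+1}(\v\gamma_{1:k},1)\bigr)$.

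There is no conceptual obstacle; the only real difficulty, and the source of the tedium the text flags, is the bookkeeping: keeping the three counting regimes apart ($2^{d-k}$ for $\v\xi$-free monomials, $2^{d-k-1}$ for monomials linear in $\v\xi$ or quadratic on the diagonal, $2^{d-k-2}$ for off-diagonal bilinear monomials) and consistently using $\gamma_i^2=\gamma_i$ and $\xi_i^2=\xi_i$ on $\B$.
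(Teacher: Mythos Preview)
Your proposal is correct. Your primary route---direct summation over $\v\xi\in\B^{d-k}$ with the three counting regimes---differs from the paper's proof, which instead takes the recursive path you describe at the end as the ``cleaner alternative'': it marginalises one component at a time, packages the intermediate result as $2^t a_0 + 2^{t-2}\tr{\m A\,M_t}$ for an explicit block matrix $M_t$, and then expands the trace. Your direct approach is arguably more transparent since it reaches the monomial weights $2^{d-k}$, $2^{d-k-1}$, $2^{d-k-2}$ in one pass without matrix notation; the paper's recursion is tidier if one actually wants the step-by-step update for sequential sampling, and the trace form keeps the bookkeeping compact. One caution: when you carry your computation through and factor out $\mu\,2^{d-k-2}$, the $\v\gamma$-dependent sums will carry a coefficient $4$, matching the paper's own proof and its later evaluation of $s_I(\v 1_k)$ but \emph{not} the display of $s_k$ in the proposition as printed---so do not be alarmed when ``the claimed $s_k$'' fails to match by that factor.
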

Recall the connection between marginal distributions and moments we observed in \eqref{eq:marginals and moments}. For $\v\gamma_I=\v 1$ we obtain
\begin{align*}
s_I(\v 1_k)
&=\textstyle4a_0+
4\sum_{i\in I}(\sum_{j\in I} a_{i,j}
+ \sum_{j\in I^c} a_{i,j})
\\
&\textstyle\qquad 
+ \sum_{i\in I^c} \sum_{j\in I^c} a_{i,j}
+ \sum_{i\in I^c} a_{i,i}\\
&=\textstyle4a_0+
\sum_{i\in D}\sum_{j\in D}a_{i,j}+\sum_{i\in D}a_{i,i}
+ 3\sum_{i\in I}\sum_{j\in I} a_{i,j}
\\
&\textstyle\qquad 
+ 2 \sum_{i\in I}\sum_{j\in I^c} a_{i,j}
- \sum_{i\in I} a_{i,i}\\
&=\textstyle4a_0+
\v 1\t \m A \v 1+\tr{\m A}+
\\
&\textstyle\qquad 
\sum_{i\in I}\left\lbrack2\sum_{j\in D} a_{i,j}+\sum_{j\in I\setminus \set i} a_{i,j}\right\rbrack,
\end{align*}
and $\pi_I(\v 1_k)=\mu 2^{d-\abs I -2}s_I(\v 1_k)$ is indeed the expression for the cross-moments in Proof of Proposition \ref{prop:cross moments}.

\subsection{Fitting the parameter}
\label{sec:fitting linear family}
Given a sample $\m X=(\v x_1,\dots,\v x_n)\t\sim\pi$ from the target distribution, we can determine $a_0$ and a matrix $\m A$ such that the family $q^\LinQu_{\m A, a_0}$ fits the first and second sampling moments
\begin{equation*}
\textstyle
\bar{x}_{\set{i,j}}=n^{-1}\sum_{k=1}^n x_{k,i}x_{k,j},\quad i,j\in D
\end{equation*}
by solving a linear system of dimension $d(d+1)/2+1$. We first use the bijection
\begin{equation*}
\tau\colon D\times D\to \set{1,\dots,d(d+1)/2},\quad
\tau(i,j)=i(i-1)/2+j 
\end{equation*}
to map symmetric matrices into $\mathbb R^{(d+1)\,d/2}$. Precisely, for the matrices $\m A$ and $\overline{\m X}$, we define the vectors
\begin{equation*}
\hat a_{\tau(i,j)}
\eqdef a_{i,j},\quad 
\hat x_{\tau(i,j)}
\eqdef \bar x_{i,j}
\end{equation*}
and the design matrix
\begin{equation*}
\hat s_{\tau(i,j),\tau(k,l)}
\eqdef 2^{\ind_{\set{i,j}}(k)+\ind_{\set{i,j,k}}(l)}.
\end{equation*}
Note that $\abs{\hat{\v a}}=\v 1\t \m A \v 1+\tr{\m A}$. We then equate the distribution moments to the sample moments and normalize such that
\begin{align}
2^{d-2}(\m I\,a_0+\frac{1}{4}\, \hat{\m S}\hat{\v a})=\hat{\v x},\quad 2^{d-2}(4a_0+\abs{\hat{\v a}})=1.
\end{align}
The solution of the linear system
\begin{equation*}
\begin{pmatrix}
\hat{\v a}^* \\
a_0^*
\end{pmatrix}
=
2^{-d+2}
\begin{bmatrix}
\frac{1}{4}\, \hat{\m S} & \v 1 \\
\,4\, \v 1 \t    &    1 \\
\end{bmatrix}^{-1}
\begin{pmatrix}
\hat{\v x} \\
1
\end{pmatrix}
\end{equation*}
is finally transformed back into a symmetric matrix $\m A^*$. Since the design matrix does not depend on the data, fitting several parameters to different data on the same space $\B^d$ is extremely fast.

\subsection{Properties}
We check the requirement list from Section \ref{sec:properties}:
\begin{enumerate}[(a)]
\item The linear family is sufficiently parsimonious having dimension $\mathrm{dim}(\theta)=d(d+1)/2$.
\item {\color{alarmcolor}We can fit the parameters $\m A$ and $a_0$ via method of moments. However, the fitted function $q_{\m A^*,a_0^*}^\LinQu(\v \gamma)$ is usually not a distribution.}
\item We can sample via chain rule factorization.
\item We can evaluate $q_{\m A,a_0}^\LinQu(\v y)$ via chain rule factorization while sampling.
\item The family $q_{\m A,a_0}^\LinQu$ reproduces the mean and correlations of the data $\m X$.
\end{enumerate}
Since in applications, the fitted matrix $\m A^*$ is hardly ever positive definite, we cannot use the linear family in an adaptive Monte Carlo context. As other authors \citep{park1996simple,emrich1991method} remark, additive representations like Proposition \ref{prop:bahadur} are instructive but we cannot derive practical families from them.

%


%

\section{The exponential quadratic family}
If $\pi(\v\gamma)>0$ for all $\v\gamma\in\B^d$, we can use $\tau=\exp$ in \eqref{eq:representations} and obtain a full log-linear representation
\begin{equation*}
\textstyle
\pi(\v\gamma)=\exp\left(\sum_{I\subseteq D} a_I\ u_I(\v\gamma)\right).
\end{equation*}

Note that we assume the probability mass function $\pi$ is assumed to be log-linear in the parameters $a_I$. In the context of contingency tables the term ``log-linear family`` refers to the assumption that the marginal probabilities $m_I$ are log-linear in the higher order marginals.

\begin{remark}
Contingency table analysis is a well studied approach to modeling discrete data \citep{bishop75discrete,christensen1997log}. For binary data, the underlying sampling distribution is assumed to be multinomial which requires an enumeration of the state space we want to avoid. \citet{gange1995generating} uses the Iterative Proportional Fitting algorithm \citep{haberman1972algorithm} from log-linear interaction theory to construct a binary distribution with given marginal probabilities. The fitting procedures, however, require storage of all configurations $\pi_I(\v\gamma_I)$ and the construction of the joint posterior from the fitted marginal probabilities. The method is powerful and exact but computationally infeasible even for moderate dimensions.
\end{remark}

\subsection{Definition}
Removing higher order interaction terms, we can construct a $d(d+1)/2$ parameter family
\begin{equation}
\label{eq:eqm}
q_{\m A}^\ExpQu(\v\gamma)\eqdef \mu\exp(\v\gamma\t\m A\v\gamma),
\end{equation}
where $\m A$ is a symmetric matrix and $\mu\eqdef[\sum_{\v\gamma\in\B^d}\exp(\v\gamma\t\m A\v\gamma)]^{-1}$. We recognize the product family \eqref{eq:product family} as the special case $\mu=\prod_{i\in D}(1-m_i)^d$ and $\m A=\diag{\logit(\v m)}$.

\subsection{Marginals}
The moments or marginal distributions of $q_{\m A}$ are sums of exponentials which, in general, do not simplify to expressions that are polynomial in $d$. Therefore, we cannot perform a chain rule factorization \eqref{eq:chain rule factorization} to sample from the family.

\citet{cox1994note} proposed the following second degree Taylor approximations to the marginal distributions which are again of the form \eqref{eq:eqm}.
\begin{proposition}
\label{prop:approx marginals}
We write the parameter $\m A$ as
\begin{equation}
\label{eq:matrix representation}
\m A=\begin{pmatrix}
	\m A' & \v b\t \\
	\v b  & c
  \end{pmatrix}, 
\end{equation}
and define the parameters
\begin{align*}
\tilde{\m A}_{d-1}
&=\textstyle
\m A'
+\left(1+\tanh(\frac{c}{2})\right)\diag{\v b}+\frac{1}{2}\,\mathrm{sech}^2(\frac{c}{2})\v b \v b\t, \\
\tilde\mu_{d-1}
&=
\mu(1+\exp(c))
\end{align*}
Then $q_{\tilde{\m A}_{d-1}}(\v\gamma_{\,1:d-1})$ is the second degree Taylor approximation to the marginal distribution $q_{\m A_{1:d-1}}(\v\gamma_{\,1:d-1})$. For a proof see Appendix \ref{proof:approx marginals}.
\end{proposition}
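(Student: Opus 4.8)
The plan is to compute the exact marginal $q_{\m A_{1:d-1}}(\v\gamma_{1:d-1})$ by summing out the last coordinate $\gamma_d\in\{0,1\}$, and then to replace the resulting non-quadratic-in-the-exponent expression by its second degree Taylor expansion, matching it against the form \eqref{eq:eqm}. Write $\v\eta\eqdef\v\gamma_{1:d-1}$ and use the block decomposition \eqref{eq:matrix representation}, so that $\v\gamma\t\m A\v\gamma=\v\eta\t\m A'\v\eta+2\gamma_d\,\v b\t\v\eta+c\,\gamma_d^2=\v\eta\t\m A'\v\eta+\gamma_d(2\v b\t\v\eta+c)$ since $\gamma_d^2=\gamma_d$ on $\B$. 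Summing the last coordinate gives
\begin{equation*}
\textstyle
q_{\m A_{1:d-1}}(\v\eta)=\mu\,\exp(\v\eta\t\m A'\v\eta)\bigl(1+\exp(2\v b\t\v\eta+c)\bigr).
\end{equation*}

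The key step is to absorb the factor $1+\exp(2\v b\t\v\eta+c)$ into the exponent and Taylor-expand. Writing $f(t)\eqdef\log(1+\exp(c+t))$, we have $q_{\m A_{1:d-1}}(\v\eta)=\mu\exp\bigl(\v\eta\t\m A'\v\eta+f(2\v b\t\v\eta)\bigr)$, and the second degree Taylor polynomial of $f$ about $t=0$ is $f(0)+f'(0)\,t+\tfrac12 f''(0)\,t^2$ with $f(0)=\log(1+e^c)$, $f'(0)=\logistic(c)=\tfrac12\bigl(1+\tanh(c/2)\bigr)$, and $f''(0)=\logistic(c)\bigl(1-\logistic(c)\bigr)=\tfrac14\operatorname{sech}^2(c/2)$. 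Substituting $t=2\v b\t\v\eta$ yields the quadratic form $f(0)+f'(0)\cdot 2\v b\t\v\eta+\tfrac12 f''(0)\cdot 4(\v b\t\v\eta)^2$. Now I would use the idempotency of binary coordinates once more: $2\v b\t\v\eta=\v\eta\t\bigl(2\diag{\v b}\bigr)\v\eta$ and $(\v b\t\v\eta)^2=\v\eta\t(\v b\v b\t)\v\eta$ are both quadratic forms in $\v\eta$ on $\B^{d-1}$ (the linear term becomes diagonal because $\eta_i^2=\eta_i$). Collecting the coefficient of $\v\eta\t(\cdot)\v\eta$ gives $\m A'+2f'(0)\diag{\v b}+2f''(0)\,\v b\v b\t=\m A'+\bigl(1+\tanh(c/2)\bigr)\diag{\v b}+\tfrac12\operatorname{sech}^2(c/2)\,\v b\v b\t=\tilde{\m A}_{d-1}$, while the constant $\mu\exp(f(0))=\mu(1+e^c)=\tilde\mu_{d-1}$ is exactly the claimed normalizing constant, so the approximation has the form $\tilde\mu_{d-1}\exp(\v\eta\t\tilde{\m A}_{d-1}\v\eta)=q_{\tilde{\m A}_{d-1}}(\v\eta)$ as required.

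I expect the main obstacle to be purely bookkeeping rather than conceptual: one must be careful that "second degree Taylor approximation" is meant with respect to the entries of $\v b$ (equivalently, expanding $f$ in its scalar argument $t=2\v b\t\v\eta$ to quadratic order), and that the rewriting of linear and squared linear forms as quadratic forms via $\eta_i^2=\eta_i$ is applied consistently so that the diagonal correction $\diag{\v b}$ and the rank-one correction $\v b\v b\t$ come out with the stated constants. A secondary point worth stating explicitly is that $\tilde\mu_{d-1}$ need not equal $[\sum_{\v\eta}\exp(\v\eta\t\tilde{\m A}_{d-1}\v\eta)]^{-1}$; it is the constant produced by the expansion, and the approximating object is only proportional to a genuine member of the family \eqref{eq:eqm}, which is all that is needed for the recursive chain-rule construction. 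Full details are deferred to Appendix \ref{proof:approx marginals}.
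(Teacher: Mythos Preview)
Your proposal is correct and follows essentially the same route as the paper: compute the exact marginal by summing out $\gamma_d$, take logarithms, Taylor-expand the non-quadratic term to second order in $\v b\t\v\eta$, and use $\eta_i^2=\eta_i$ to rewrite the linear and squared-linear pieces as quadratic forms. The only cosmetic difference is that the paper first rewrites $1+\exp(2\v b\t\v\eta+c)$ via $2\cosh(\cdot)$ and then expands $\log\cosh$, whereas you expand $f(t)=\log(1+e^{c+t})$ directly using the logistic function; since $f'(0)=\tfrac12(1+\tanh(c/2))$ and $f''(0)=\tfrac14\operatorname{sech}^2(c/2)$, the two computations are termwise identical.
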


\noindent
If we recursively compute $q_{\tilde{\m A}_{d-1}},\dots,q_{\tilde{\m A}_1}$, we can derive approximate conditional probabilities using \eqref{eq:cond probabilities}. Precisely, we have
\begin{equation}
\label{eq:log-linear approximate marginals}
q_{\tilde{\m A}_{i}}(\gamma_i=1\mid\v \gamma_{1:i-1})=\logit^{-1}(\tilde c_i + \tilde{\v b}_i\t\gamma_{1:i-1}),
\end{equation}
where $\logit^{-1}(x)=(1+\exp(-x))^{-1}$ and $\tilde c_i,\ \tilde{\v b}_i$ are parts of the matrix $\tilde{\m A}_i$ according to the notation introduced in \eqref{eq:matrix representation}. In particular, \eqref{eq:log-linear approximate marginals} is a logistic regression. We come back to this class of families in the following Section \ref{sec:logistic conditionals family}. We can sample from the proxy
\begin{equation*}
\textstyle
\tilde q_{\tilde{\m A}}(\v\gamma)\eqdef\prod_{i\in D} q_{\tilde{\m A}_i}(\gamma_i\mid\v \gamma_{1:i-1})\approx q_{\m A}^\ExpQu(\v\gamma),
\end{equation*}
which is close to the original exponential quadratic family. The goodness of the approximation might be improved by judicious permutation of the components. The approximation error is hard to control, however, since we repeatedly apply the second degree approximation and propagate initial errors. 

\subsection{Fitting the parameter}
As in section \ref{sec:fitting linear family}, we use the bijection
\begin{equation*}
\tau\colon D\times D\to \set{1,\dots,d(d+1)/2},\quad
\tau(i,j)=i(i-1)/2+j 
\end{equation*}
to map symmetric matrices into $\R^{(d+1)\,d/2}$. Precisely, for the matrices $\m A$ and $\overline{\m X}$, we define the vectors
\begin{equation*}
\hat a_{\tau(i,j)}
\eqdef a_{i,j},\quad 
\hat x_{\tau(i,j)}
\eqdef \bar x_{i,j}.
\end{equation*}
We let $y_k=\log \pi(\v x_k)$ for $k=1,\dots,n$ and fit the family solving the least square problem
\begin{equation*}
\textstyle\min_{\hat{\v a}\in\R^{(d+1)\,d/2}} \norm{\hat{\m X} \hat{\v a}-\v y}_2
\end{equation*}
which yields the parameters
\begin{equation*}
a_{i,j}^*=\lbrack(\hat{\m X}\t\hat{\m X})^{-1}\hat{\m X}\t\v y \rbrack_{\tau(i,j)}.
\end{equation*}
Note that in most adaptive Monte Carlo algorithms that involve importance sampling or Markov transitions, the probabilities $\pi(\v x_k)$ of the target distribution are already computed such that the fitting procedure is rather fast.

\subsection{Properties}
We check the requirement list from Section \ref{sec:properties}:
\begin{enumerate}[(a)]
\item The log-linear family is sufficiently parsimonious with $\mathrm{dim}(\theta)=d(d+1)/2$.
\item We can fit the parameter $\m A$ via minimum least squares.
\item {\color{alarmcolor} We can sample from an approximation $\tilde q_{\tilde{\m A}}(\v\gamma)\approx q_{\m A}(\v\gamma)$ to the log-linear family. However, we cannot control the approximation error.}
\item We can evaluate $q_{\m A,a_0}(\v y)$ up to the normalization constant $\v \mu$ which suffices for most adaptive Monte Carlo methods.
\item The family $q_{\m A,a_0}$ reproduces the mean and correlations of the data $\m X$.
\end{enumerate}

%


%

\section{The logistic conditionals family}
\label{sec:logistic conditionals family}
In the previous section we saw that even for a rather simple non-linear family we cannot derive closed-form expressions for the marginal probabilities. Therefore, instead of computing the marginals for a $d$-dimensional family $q_\theta(\v\gamma)$, we directly fit univariate families
\begin{equation*}
q_{\theta}(\gamma_i=1\mid\gamma_{1:i-1}),\qquad i\in D
\end{equation*}
to the conditional probabilities $\pi(\gamma_i=1\mid\gamma_{1:i-1})$ of the target function. Precisely, we postulate the logistic relation
\begin{equation*}
\textstyle
\logit(\probplus{\pi}{\gamma_i=1})= b_{i,i}+\sum_{j=1}^{i-1} b_{i,j} \gamma_j,\quad i\in D
\end{equation*}
for the marginal probabilities where $\logit$ is the logit function defined in \eqref{eq:logit}.

\subsection{Definition}
For a $d$-dimensional lower triangular matrix $\m B$, we define the logistic conditionals family as
\begin{align}
\label{eq:lb}
\textstyle
q_{\m B}^\LogCo(\v \gamma)
&\eqdef \prod_{i\in D}
q^\Prod_{p(b_{i,i}+\v b_{i,1:i-1}\t \v \gamma_{1:i-1})}(\gamma_i) \\
&\nonumber=\exp\left[\ \sum_{i\in D}\left[
\gamma_i(b_{i,i}+\v b_{i,1:i-1}\t \v \gamma_{1:i-1})-\log\left(1+\exp(b_{i,i}+\v b_{i,1:i-1}\t \v \gamma_{1:i-1})\right)\right]\right]
\end{align}
where $q^\Prod_p$ is the Bernoulli distribution and
\begin{equation*}
p(x)=\logit^{-1}(x)=(1+\exp(-x))^{-1}
\end{equation*}
the logistic function. We immediately identify the product family $q^\Prod_{\v m}$ as the special case $\m B=\diag{\logit(\v m)}$. The logistic conditionals family is not in the exponential family.

Note that there are $d!$ possible logistic families and we arbitrarily pick one while there should be a permutation $\sigma(D)$ of the components which is optimal in a sense of nearness to the data. In practice, however, changing the parametrization does not seem to have a noticeably impact on the quality of the adaptive Monte Carlo algorithm.

\subsection{Sparse logistic regressions}
The major drawback of all multiplicative families is the fact that they do not have closed-form likelihood-maximizers such that the parameter estimation requires costly iterative fitting procedures. Therefore, we construct a sparse version of the logistic regression family which we can estimate faster than the saturated family.

Instead of fitting the parameter of the saturated family $q^\LogCo_{\v b}(\gamma_i\mid\gamma_{1:i-1})$, we preferably work with a more parsimonious regression family like $q^\LogCo_{\v b}(\gamma_i\mid\gamma_{L_i})$ for some index set $L_i\subseteq \lbrace1,\dots,i-1\rbrace$, where the number of predictors $\#L_i$ is typically smaller than $i-1$.

We solve this nested variable selection problem using some simple, fast to compute criterion. For $\varepsilon$ about $\frac{1}{100}$, we define the index set
\begin{equation*}
I\eqdef\lbrace i=1,\dots,d \mid \ \bar{x}_i\, \notin\, (\varepsilon,1-\varepsilon)\, \rbrace.
\end{equation*}
which identifies the components which have, according to the data, a marginal probability close to either boundary of the unit interval.

We do not fit a logistic regression for the components $i\in I$. We rather set $L_i=\emptyset$ and draw them independently, that is we set $b_{i,i}=\logit(\bar x_i)$ and $\v b_{i,-i}=\v 0$ which corresponds to logistic conditionals family without predictors. The reason is twofold. Firstly, interactions do not really matter if the marginal probability is excessively small or large. Secondly, these components are prone to cause complete separation in the data or might even be constant.

For the conditional distribution of the remaining components $I^c=D\setminus I$, we construct parsimonious logistic regressions. For $\delta$ about $\frac{1}{10}$, we define the predictor sets
\begin{equation*}
L_i\eqdef\lbrace j=1,\dots,i-1 \mid \delta < \abs{r_{i,j}} \rbrace,\quad i\in I^c,
\end{equation*}
which identifies the components with index smaller than $i$ and significant mutual association.

\subsection{Fitting the parameter}
Given a sample $\m X=(\v x_1,\dots,\v x_n)\t\sim\pi$ from the target distribution we regress $\v y^{(i)}=\m X_i$ on the columns $\m Z^{(i)}=(\m X_{1:i-1}, \v 1)$, where the column $\v Z_i^{(i)}$ yields the intercept to complete the logistic conditionals family.

We maximize the log-likelihood function $\ell(\v b)=\ell(\v b\mid\v y, \m Z)$ of a weighted logistic regression family by solving the first order condition $\partial\ell/\partial \v \beta=\v 0$. We find a numerical solution via Newton-Raphson iterations
\begin{equation}
\label{newton log regression}
-\frac{\partial^2\ell(\v b^{(r)})}{\partial \v b \v b\t}(\v
b^{(r+1)}-\v b^{(r)})
=\frac{\partial\ell(\v b^{(r)})}{\partial \v b}, \quad r>0,
\end{equation}
starting at some $\v b^{(0)}$; see Procedure \ref{algo:fit logistic} for the exact terms. Other updating formulas like Iteratively Reweighted Least Squares or quasi-Newton iterations should work as well.

\begin{algorithm}[H]
\caption{Fitting the weighted logistic regressions}
\label{algo:fit logistic}
\begin{algorithmic}
\REQUIRE{$\v w=(w_1,\dots,w_n),\ \m X=(x_1,\dots,x_n)\t,\ \m B\in\R^{d\times d}$\\[.5em]}
\FOR {$i\in I^c$}
  \STATE $\m Z\gets(\m X_{L_i},\v 1),\ \v y\gets\m X_i, \ \v b^{(0)}\gets\m B_{i,L_i\cup\lbrace i \rbrace}$ 
  \REPEAT
  \STATE
    \begin{tabular}{rllr}
    $p_k$       &\hspace{-3mm}$\gets$&\hspace{-3mm}$\logit^{-1}(\m Z_k \v b^{(r-1)})$&\textbf{ for all }$k=1,\dots,n$ \\[.5em]
    $q_k$       &\hspace{-3mm}$\gets$&\hspace{-3mm}$p_k(1-p_k)$                      &\textbf{ for all }$k=1,\dots,n$ \\[.5em]
    \end{tabular} \\
    \begin{tabular}{ll}
    $\v b^{(r)}\gets$&\hspace{-3mm}$\left(\m Z\t \diag{\v w} \diag{\v q} \m Z+\varepsilon\m I_n\right)^{-1} \times$ \\[.5em]
    \hfill           &\hspace{-3mm}$\left(\m Z\t \diag{\v w}\right)\left(\diag{\v q} \m Z\, \v b^{(r-1)}+\left(\v y - \v p\right)\right)$\\[.5em]
    \end{tabular}
  \UNTIL {$|b_j^{(r)}-b_j^{(r-1)}|<10^{-3}$ for all $j$}\\[.5em]
  \STATE $\m B_{i,L_i\cup\lbrace i \rbrace}\gets\v b$\\[.5em]
\ENDFOR
\RETURN $\m B$
\end{algorithmic}
\end{algorithm}

Sometimes, the Newton-Raphson iterations do not converge because the likelihood function is monotone and thus has no finite maximizer. This problem is caused by data with complete or quasi-complete separation in the sample points \citep{albert_84}. There are several ways to handle this issue.
\begin{enumerate}[(a)]
\item
We just halt the algorithm after a fixed number of iterations and ignore the lack of convergence. Such proceeding, however, might cause uncontrolled numerical problems.
\item
\citet{firth_93} recommends the Jeffreys prior for its bias reduction but this option is computationally rather expensive. We might instead use a Gaussian prior with variance $1/\varepsilon>0$ which adds a quadratic penalty term $\varepsilon\v b\t \v b$ to the log-likelihood to ensure the target-function is convex.
\item
As we notice that some terms of $\v b_i$ are growing beyond a certain threshold, we move the component $i$ from the set of components with associated logistic regression family $I^c$ to the set of independent components $I$.
\end{enumerate}
In practice, we recommend to combine the approaches (c) and (d). In Procedure \ref{algo:fit logistic}, we did not elaborate how to handle non-convergence, but added a penalty term to the log-likelihood, which causes the extra $\varepsilon\m I_n$ in the Newton-Raphson update. Since we solve the update equation via Cholesky factorizations, adding a small term on the diagonal ensures that the matrix is indeed numerically decomposable.

\subsection{Properties}
We check the requirement list from Section \ref{sec:properties}:
\begin{enumerate}[(a)]
\item The logistic regression family is sufficiently parsimonious with $\mathrm{dim}(\theta)=d(d+1)/2$.
\item We can fit the parameters $\v b_i$ via likelihood maximization for all $i\in D$. The fitting is computationally intensive but feasible.
\item We can sample $\v y\sim q^\LogCo_{\m B}$ via chain rule factorization.
\item We can exactly evaluate $q^\LogCo_{\m B}(\v y)$.
\item The family $q^\LogCo_{\m B}$ reproduces the dependency structure of the data $\m X$ although we cannot explicitly compute the marginal probabilities.
\end{enumerate}

\section{The Gaussian copula family}
In the preceding sections, we discussed three approaches based on generalized linear families. Now we turn to the second class of families we call copula families.

Let $h_\theta$ be a family of auxiliary distributions on $\mathcal X$ and $\tau\colon\mathcal X \to \B^d$ a mapping into the binary state space. We can sample from the copula family
\begin{equation*}
\textstyle q_{\theta}^{h,\tau}(\v\gamma)=\int_{\tau^{-1}(\v\gamma)}\,h_\theta(\v v)\,d\v v
\end{equation*}
by setting $\v y=h(\v v)$ for a draw $\v v\sim h_\theta$ from the auxiliary distribution.

\subsection{Definition}
Apparently, non-normal parametric distributions $s_{\theta}$ with at most $d(d-1)/2$ dependence parameters either have a very limited dependence structure or rather unfavorable properties \citep{joe1996families}. Therefore, the multivariate Gaussian distribution with
\begin{equation*}
h_{\m \Sigma}(\v v)=(2\pi)^{-\frac{d}{2}}\abs{\m \Sigma}^{-\frac{1}{2}}\exp(-\frac{1}{2}\,\v v\t\m \Sigma^{-1}\v v),
\end{equation*}
and mapping $\tau\colon\R^d\to\B^d$
\begin{equation*}
\tau_{\v \mu}(\v v)=(\ind_{(\infty,\mu_i]}(v_1),\dots,\ind_{(\infty,\mu_d]}(v_d)),
\end{equation*}
appears to be the natural and almost the only option for $h_\theta$. The Gaussian copula family, denoted by $q_{\v\mu,\m\Sigma}^\Gau$, has already been discussed repeatedly in the literature \citep{emrich1991method, leisch1998generation, cox2002some}.

\subsection{Moments}
For $I\subseteq D$, the cross-moment or marginal probabilities is
\begin{align*}
\textstyle
m_I
&\textstyle =\sum_{\v\gamma\in\B^d} q_{\v\mu,\m \Sigma}(\v1_I,\v\gamma_{D\setminus I})
\textstyle =\int_{\cup_{\v\gamma\in\B^d}\,\set{\tau_{\v \mu}^{-1}(\v 1_I,\v\gamma_{D\setminus I})}} \,h_{\m \Sigma}(\v v)\,d\v v \\
&\textstyle =\int_{\times_{i\in I} \set{\tau_{\mu_i}^{-1}(1)}} \,h_{\m \Sigma}(\v v)\,d\v v
\textstyle =\int_{\times_{i\in I}(-\infty,\mu_i]}\,h_{\m \Sigma}(\v v)\,d\v v,
\end{align*}
where we used \eqref{eq:marginals and moments} in the first line. Thus, the first and second moment of $q_{(\v \mu,\m \Sigma)}$ are
\begin{equation*}
m_i=\varPhi_1(\mu_i),\quad m_{i,j}=\varPhi_2(\mu_i,\mu_j;\sigma_{i,j})
\end{equation*}
where $\varPhi_1(v_i)$ and $\varPhi_2(v_i,v_j;\sigma_{i,j})$ denote the cumulative distribution functions of the univariate and bivariate normal distributions with zero mean, unit variance and correlation coefficient $\sigma_{i,j}\in[-1,1]$.

\subsection{Sparse Gaussian copulas}
We can speed up the parameter estimation and improve the condition of $\m \Sigma$, if we work with a parsimonious Gaussian copula. We can apply the same criterion we already introduced for the sparse logistic regression family. For $\varepsilon$ about $\frac{1}{100}$, we define the index set
\begin{equation*}
I\eqdef\lbrace i=1,\dots,d \mid \ \bar{x}_i\, \notin\, (\varepsilon,1-\varepsilon)\, \rbrace.
\end{equation*}
which identifies the components which have a marginal probability close to either boundary of the unit interval.

We do not fit a any correlation parameters for the components in $I$ but set $\sigma_{i,j}=0$ for all $j\in D\setminus \set i$. Firstly, the correlation does not really matter if the marginal probability is excessively small or large. Secondly, we fit the parameter $\m \Sigma$ by separately adjusting the bivariate correlations $\sigma_{i,j}$, and components with high correlations and extreme marginal probability lower the chance that $\m \Sigma$ is positive definite.

For the remaining components $I^c=D\setminus I$, we construct parsimonious Gaussian copula. For $\delta$ about $\frac{1}{10}$, we define the association set
\begin{equation*}
A\eqdef\set{\set{i,j}\in I^c\times I^c \mid \delta < \abs{r_{i,j}},\, i\neq j}
\end{equation*}
which identifies the components with significant correlation. For $i,j\in D\times D\setminus L$ we also set $\sigma_{i,j}=0$ to accelerate the estimation procedure.

\subsection{Fitting the parameter}
We fit the family $q\Gau_{(\v\mu,\m \Sigma)}$ to the data by adjusting $\v \mu$ and $\m \Sigma$ to the sample moments. Precisely, we solve the equations
\begin{align}
\label{eq:gaussian copula mean}
\varPhi_1(\mu_i)&=\bar x_i,\hspace{-2cm}& i\in D \\
\label{eq:gaussian copula corr}
\varPhi_2(\mu_i,\mu_j;\sigma_{i,j})&=\bar x_{i,j},\hspace{-2cm}& (i,j)\in A
\end{align}
with sample mean $\bar x_i$ and $\bar x_{i,j}$ as defined in \eqref{eq:sample mean}. We easily solve \eqref{eq:gaussian copula mean} by setting
\begin{equation*}
\mu_i=\Phi_1^{-1}(\bar x_i),\qquad i\in D.
\end{equation*}
The difficult task is computing a feasible correlation matrix from \eqref{eq:gaussian copula corr}. Recall the standard result \citep[p.255]{johnson2002continuous}
\begin{equation}
\label{eq:der of norm pdf}
\frac{\partial \Phi_2(y_1,y_2;\sigma)}{\partial\sigma}=h_{\sigma}(y_1,y_2),
\end{equation}
where $h_{\sigma}$ denotes the density of the bivariate normal distribution. We obtain the following Newton-Raphson iteration
\begin{equation}
\label{newton local parameters}
\alpha_{r+1}=\alpha_{r}-
\frac{\Phi_2(\mu_i,\mu_j;\alpha_r)-\bar x_{i,j}}{h_{\alpha_r}(\mu_i,\mu_j)},\quad (i,j)\in A,
\end{equation}
starting at some $\alpha_{0}\in(-1, 1)$. We use a fast series approximation \citep{drezner_98,divgi_79} to evaluate $\Phi_2(\mu_i,\mu_j;\alpha)$. These approximations are critical when $\alpha_r$ comes very close to either boundary of $[-1,1]$. The Newton iteration might repeatedly fail when restarted at the corresponding boundary $r_0\in\set{-1,1}$. This is yet another reason why it is preferable to work with a sparse Gaussian copula. In any event, $\Phi_2(y_1,y_2;\sigma)$ is monotonic in $\sigma$ since \eqref{eq:der of norm pdf}, and we can switch to bi-sectional search if necessary.

\begin{algorithm}[H]
\caption{Fitting the dependency matrix}
\label{algo:fit gaussian}
\begin{algorithmic}
\REQUIRE{$\bar x_i,\ \bar x_{i,j}$\textbf{ for all }$i,j\in D$\\[.5em]}
\STATE $\mu_i=\varPhi_{-1}(\bar x_i)$\textbf{ for all }$i\in D$
\STATE $\m \Sigma=\m I_d$
\FOR {$(i,j)\in A$}
  \REPEAT 
  \STATE $\displaystyle \sigma_{i,j}^{(r+1)}\gets
  \sigma_{i,j}^{(r)}-\frac{\Phi_2(\mu_i,\mu_j;\sigma_{i,j}^{(r)})-\bar x_{i,j}}{h_{\sigma_{i,j}^{(r)}}(\mu_i,\mu_j)}$\\[.5em]
  \UNTIL {$|\sigma_{i,j}^{(r)}-\sigma_{i,j}^{(r-1)}|<10^{-3}$}\\[.3em]
\ENDFOR
\STATE \textbf{if not }$\m\Sigma\succ 0$\textbf{ then }$\m \Sigma\gets(\m \Sigma+\abs{\lambda}\m I_d)/(1+\abs{\lambda})$
\RETURN $\v \mu,\, \m \Sigma$
\end{algorithmic}
\end{algorithm}

A rather discouraging shortcoming of the Gaussian copula family is that locally fitted correlation matrices $\m \Sigma$ might not be positive definite for $d \geq 3$. This is due to the fact that an elliptical copula, like the Gaussian, can only attain the bounds \eqref{eq:bin bounds} for $d<3$, but not for higher dimensions.

We propose two ideas to obtain an approximate, but feasible parameter:
\begin{enumerate}[(1)]
\item
\space We replace $\m \Sigma$ by $\m \Sigma^*=(\m \Sigma+\abs{\lambda}\m I)/(1+\abs{\lambda})$, where $\lambda$ is the smallest eigenvalue of the dependency matrix $\m \Sigma$. This approach evenly lowers the local correlations to a feasible level and is easy to implement on standard software. Alas, we make an effort to estimate $d(d-1)/2$ dependency parameters, and in the end we might not get more than an product family.

\item
We can compute the correlation matrix $\m \Sigma^*$ which minimizes the distance $\norm{\m \Sigma^*-\m \Sigma}_F$, where $\norm{\m A}^2_F=\tr{\m A \m A\t}$. In other words, we construct the projection of $\m \Sigma$ into the set of correlation matrices. \citet{higham_02} proposes an Alternating Projections algorithm to solve nearest-correlation matrix problems. Yet, if $\m \Sigma$ is rather far from the set of correlation matrices, computing the projection is expensive and, according to our experience, leads to troublesome distortions in the correlation structure.
\end{enumerate}

\subsection{Properties}
We check the requirement list from Section \ref{sec:properties}:
\begin{enumerate}[(a)]
\item The Gaussian copula family is sufficiently parsimonious with $\mathrm{dim}(\theta)=d(d+1)/2$.
\item We can fit the parameters $\v \mu$ and $\m \Sigma$ via method of moments. {\color{alarmcolor} The parameter $\m \Sigma$ is not always be positive definite which might require additional effort it feasible.}
\item We can sample $\v y\sim q^\Gau_{(\v \mu,\m \Sigma)}$ using $\v y=\tau_{\mu}(\v v)$ with $\v v\sim h_{\m \Sigma}$.
\item {\color{alarmcolor}We cannot evaluate $q^\Gau_{\m B}(\v y)$ since this requires computing a high-dimensional integral expression.}
\item The family $q^\Gau_{(\v \mu,\m \Sigma)}$ reproduces the mean and correlation structure of the data $\m X$.
\end{enumerate}
Obviously, we cannot use the Gaussian copula family in the context of importance sampling or Markov chain Monte Carlo, since evaluation of $q^\Gau_{(\v\mu,\m \Sigma)}(\v y)$ is not feasible. This family might be useful, however, in other adaptive Monte Carlo algorithms, for instance the Cross-Entropy method \citep{Rub:CE1} for combinatorial optimization.

\section{The Poisson reduction family}
Let $N=\set{1,\dots,n}$ denote another index set with $n\gg d$. Approaches to generating binary vectors that do not rely on the chain rule factorization \eqref{eq:chain rule factorization} are usually based on combinations of independent random variables
\begin{equation*}
\v v=(v_1,\dots,v_n)\sim \otimes_{k\in N} h_{\theta_k}.
\end{equation*}
We define index sets $\mathcal{M}=\set{S_i\in N \mid i\in D}$ and generate the entry $y_i$ via
\begin{equation*}
\tau_i\colon\mathcal{X}^{\abs{S_i}}\to\set{0,1},\quad \tau_i(\v v)=f(\v v_{S_i}),\quad i\in D.
\end{equation*}
In the context of Gaussian copulas, the auxiliary distributions $h_{\theta_k}=h_\theta$ are $d$ independent standard normal variables. \citet{park1996simple} propose the following family based on sums of independent Poisson variables.

\subsection{Definition}
We define a Poisson family $q^\Poi_{(\mathcal S,\lambda)}$ with auxiliary distribution
\begin{equation*}
\textstyle
h_{\lambda}(\v v)=\prod_{k\in N} (\lambda_k^{v_k}e^{-\lambda_k})/v_k!
\end{equation*}
and mapping $\tau\colon\N_0^{n}\to\B^d$
\begin{equation*}
\textstyle
\tau_{\mathcal S}(\v v)=(\ind_{\set 0}(\sum_{k\in S_1} v_k),\dots,\ind_{\set 0}(\sum_{k\in S_d} v_k)).
\end{equation*}

\subsection{Moments}
For an index set $I\in D$, the cross-moments or marginal probabilities are
\begin{equation*}
\textstyle
m_{I}=\prob{\forall i\in I\colon\sum_{k\in S_i}v_k=0}=\exp(-\sum_{k\in\cap_{i\in I}S_i} \lambda_k).
\end{equation*}
Therefore, fitting via method of moments is possible.

\begin{proposition}
\label{prop:poisson family}
For $\v\gamma\in\B^d$, define the index sets
\begin{equation*}
D_0=\set{i\in D\mid \gamma_i=0}, \quad 
D_1=\set{i\in D\mid \gamma_i=1},
\end{equation*}
and the families of subsets $\mathcal I_t=\set{I\in D_1\mid \abs I=t}$. We can write the mass function of the Poisson family as
\begin{align*}
q^\Poi_{(\mathcal S,\lambda)}(\v\gamma)
&=\textstyle \sum_{\v v \in \tau^{-1}(\v\gamma)}h_{\lambda}(\v v) \\
&=\textstyle m_{D_0}\bigg\lbrack1- \sum_{t=1}^{\abs{D_0}}(-1)^{t-1} \sum_{I\subseteq \mathcal I_t}\exp(-\sum_{k\in \cap_{i\in I} S_i\setminus \cup_{j\in D_1}S_j} \lambda_k) \bigg\rbrack.
\end{align*}
For a proof see Appendix \ref{proof:poisson family}.
\end{proposition}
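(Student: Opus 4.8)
The plan is to read $q^\Poi_{(\mathcal S,\lambda)}(\v\gamma)=\sum_{\v v\in\tau^{-1}(\v\gamma)}h_\lambda(\v v)$ as the probability, under $h_\lambda$, that the vector $\v v=(v_1,\dots,v_n)$ of independent Poisson coordinates (rate $\lambda_k$ on coordinate $k$) lands in the fibre $\tau^{-1}(\v\gamma)$, and to insert the Poisson moment formula $m_I=\exp(-\sum_{k\in\cup_{i\in I}S_i}\lambda_k)$ at the appropriate places. The starting observation is that $\sum_{k\in S_i}v_k=0$ holds iff $v_k=0$ for every $k\in S_i$, so the event $\{\tau_{\mathcal S}(\v v)=\v\gamma\}$ decomposes as $\set{v_k=0\ \forall k\in E}\cap\set{\forall i\in D_0\ \exists k\in S_i\colon v_k\geq1}$, where $E\eqdef\cup_{i\in D_1}S_i$ collects the coordinates that the entries $\gamma_i=1$ force to be zero.

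First I would exploit independence of the Poisson coordinates across the partition $N=E\sqcup(N\setminus E)$. The first factor depends only on $\set{v_k}_{k\in E}$ and has probability $\exp(-\sum_{k\in E}\lambda_k)=m_{D_1}$; conditionally on it the coordinates $\set{v_k}_{k\in N\setminus E}$ remain independent Poisson, and for $i\in D_0$ the demand ``$\exists k\in S_i\colon v_k\geq1$'' becomes ``$\exists k\in S_i\setminus E\colon v_k\geq1$''. Hence $q^\Poi_{(\mathcal S,\lambda)}(\v\gamma)=m_{D_1}\,\prob{\forall i\in D_0\ \exists k\in S_i\setminus E\colon v_k\geq1}$; note this automatically vanishes whenever some $S_i\subseteq E$ for $i\in D_0$, as it must, since then $\gamma_i=0$ is incompatible with $\gamma_{D_1}=\v1$.

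Next I would expand the bracketed probability by inclusion--exclusion on the events $C_i\eqdef\set{v_k=0\ \forall k\in S_i\setminus E}$, $i\in D_0$: the complement of the desired event is $\cup_{i\in D_0}C_i$, so the probability equals $1-\sum_{t=1}^{\abs{D_0}}(-1)^{t-1}\sum_{I\in\mathcal I_t}\prob{\cap_{i\in I}C_i}$, with $\mathcal I_t$ the family of $t$-element subsets of $D_0$. Since $\cap_{i\in I}C_i=\set{v_k=0\ \forall k\in(\cup_{i\in I}S_i)\setminus E}$, its probability is $\exp(-\sum_{k\in(\cup_{i\in I}S_i)\setminus E}\lambda_k)$ by the product form of $h_\lambda$; substituting and recalling $E=\cup_{j\in D_1}S_j$ yields the asserted formula.

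There is no genuinely hard analytic step here — it is elementary inclusion--exclusion applied to independent Poisson coordinates. The only real care needed is set bookkeeping: keeping straight which of $D_0,D_1$ indexes the ``forced-to-zero'' coordinates, using $(\cup_{i\in I}S_i)\setminus E=\cup_{i\in I}(S_i\setminus E)$ so that the conditioning commutes with the union, and checking the degenerate cases $D_0=\emptyset$ (which should reduce to $q=m_D$) and $D_1=\emptyset$ (which should reduce to the bare inclusion--exclusion expansion of $\prob{Y=\v0}=1-\sum_{t\geq1}(-1)^{t-1}\sum_{\abs I=t}m_I$) to confirm the empty sums and empty unions are being read with the right conventions.
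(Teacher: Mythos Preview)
Your proposal is correct and follows exactly the paper's route: factor out the $D_1$-contribution using independence of the Poisson coordinates, then apply inclusion--exclusion over the events $C_i$, $i\in D_0$. Your set bookkeeping is in fact cleaner than the paper's printed version, which carries typos (it writes $m_{D_0}$ where your $m_{D_1}$ is what the argument actually produces, $\cap_{i\in I}S_i$ in the exponent where your $\cup_{i\in I}S_i$ is correct, and defines $\mathcal I_t$ as subsets of $D_1$ where $D_0$ is meant).
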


\subsection{Fitting the parameter}
We need to determine the family of index sets $\mathcal{M}$ and the Poisson parameters $\lambda=(\lambda_1,\dots,\lambda_n)$ such that the resulting family $q_{(\mathcal{S},\lambda)}$ is optimal in terms of distance to the mean and correlation. Obviously, we face a rather difficult combinatorial problem. \citet{park1996simple} describe a greedy algorithm, based on convolutions of Poisson variables, that finds at least some feasible combination of $\mathcal S$ and $\v \lambda$.

\subsection{Properties}
We check the requirement list from Section \ref{sec:properties}:
\begin{enumerate}[(a)]
\item The Poisson reduction family is not necessarily parsimonious. The number of parameters $\dim(\theta)$ is determined by the fitting algorithm.
\item We fit the family via method of moments using a fast but non-optimal greedy algorithm.
\item We sample $\v y\sim q^\Poi_{(\mathcal S,\lambda)}$ using $\v y=\tau_{\mathcal S}(\v v)$ with $\v v\sim h_{\v \lambda}$.
\item {\color{alarmcolor} We cannot evaluate $q^\Poi_{(\mathcal S,\lambda)}(\v y)$ since it requires summation of $2^{d-\abs{\v y}}-1$ terms using an inclusion-exclusion principle which is computationally not feasible.}
\item The family $q^\Poi_{(\mathcal S,\lambda)}$ can partially reproduce the mean and certain correlation structures of the data $\m X$. {\color{alarmcolor}We cannot sample negative correlations.}
\end{enumerate}
Since the family is limited to certain patterns of non-negative correlations, we cannot use it as general-purpose family in adaptive Monte Carlo algorithms. It might be useful, however, if we know that the target distribution $\pi$ has strictly non-negative correlations.

\section{The Archimedean copula family}
\cite{genest2007primer} discuss in detail the potentials and pitfalls of applying copula theory, which is well developed for bivariate, continuous random variables, to multivariate discrete distribution. Yet, there have been earlier attempts to sample binary vectors via copulas: \citet{lee1993generating} describes how to construct an Archimedean copula, more precisely the Frank family, (see e.g. \citet[p.119]{nelsen2006introduction}), for sampling multivariate binary data.

Unfortunately, most results in copula theory do not easily extend to high dimensions. Indeed, we need to solve a non-linear equation for each component when generating a random vector from the Frank copula, and \citet{lee1993generating} acknowledges that this is only applicable for $d\leq3$. For low-dimensional problems, however, we can just enumerate the solution space $\B^d$ and draw from an alias table \citep{walker1977efficient}, which somewhat renders the Archimedean copula approach an interesting exercise, but without much practical value in Monte Carlo applications.



\begin{thebibliography}{}

\bibitem[Albert and Anderson, 1984]{albert_84}
Albert, A. and Anderson, J.~A. (1984).
\newblock {On the existence of maximum likelihood estimates in logistic
  regression models}.
\newblock {\em Biometrika}, (72):1--10.

\bibitem[Bahadur, 1961]{bahadur61representation}
Bahadur, R. (1961).
\newblock {A representation of the joint distribution of responses to n
  dichotomous items}.
\newblock In Solomon, H., editor, {\em {Studies in Item Analysis and
  Prediction}}, pages pp. 158--68. Stanford University Press.

\bibitem[Bishop et~al., 1975]{bishop75discrete}
Bishop, Y., Fienberg, S., and Holland, P. (1975).
\newblock {\em {Discrete multivariate analysis: Theory and Practice}}.
\newblock Cambridge, MA: MIT Press.

\bibitem[Christensen, 1997]{christensen1997log}
Christensen, R. (1997).
\newblock {\em {Log-linear models and logistic regression}}.
\newblock Springer Verlag.

\bibitem[Cox and Wermuth, 1994]{cox1994note}
Cox, D. and Wermuth, N. (1994).
\newblock {A note on the quadratic exponential binary distribution}.
\newblock {\em Biometrika}, 81(2):403--408.

\bibitem[Cox and Wermuth, 2002]{cox2002some}
Cox, D. and Wermuth, N. (2002).
\newblock {On some models for multivariate binary variables parallel in
  complexity with the multivariate Gaussian distribution}.
\newblock {\em Biometrika}, 89(2):462.

\bibitem[Divgi, 1979]{divgi_79}
Divgi (1979).
\newblock {Computation of univariate and bivariate normal probability
  functions}.
\newblock {\em The Annals of Statistics}, (7):903--910.

\bibitem[Drezner and Wesolowsky, 1990]{drezner_98}
Drezner, Z. and Wesolowsky, G.~O. (1990).
\newblock {On the computation of the bivariate normal integral}.
\newblock {\em Journal of Statistical Computation and Simulation},
  (35):101--107.

\bibitem[Emrich and Piedmonte, 1991]{emrich1991method}
Emrich, L. and Piedmonte, M. (1991).
\newblock {A method for generating high-dimensional multivariate binary
  variates}.
\newblock {\em The American Statistician}, 45:302--304.

\bibitem[Firth, 1993]{firth_93}
Firth, D. (1993).
\newblock {Bias reduction of maximum likelihood estimates}.
\newblock {\em Biometrika}, (80):27--38.

\bibitem[Gange, 1995]{gange1995generating}
Gange, S. (1995).
\newblock {Generating Multivariate Categorical Variates Using the Iterative
  Proportional Fitting Algorithm}.
\newblock {\em The American Statistician}, 49(2).

\bibitem[Genest and Neslehova, 2007]{genest2007primer}
Genest, C. and Neslehova, J. (2007).
\newblock {A primer on copulas for count data}.
\newblock {\em Astin Bulletin}, 37(2):475.

\bibitem[Haberman, 1972]{haberman1972algorithm}
Haberman, S. (1972).
\newblock {Algorithm AS 51: Log-linear fit for contingency tables}.
\newblock {\em Applied Statistics}, pages 218--225.

\bibitem[Higham, 2002]{higham_02}
Higham, N.~J. (2002).
\newblock {Computing the nearest correlation matrix --- a problem from
  finance}.
\newblock {\em IMA Journal of Numerical Analysis}, (22):329--343.

\bibitem[Joe, 1996]{joe1996families}
Joe, H. (1996).
\newblock {Families of m-variate distributions with given margins and m (m-1)/2
  bivariate dependence parameters}.
\newblock {\em Lecture Notes-Monograph Series}, 28:120--141.

\bibitem[Johnson et~al., 2002]{johnson2002continuous}
Johnson, N., Kotz, S., and Balakrishnan, N. (2002).
\newblock {\em {Continuous multivariate distributions - models and
  applications}}, volume~2.
\newblock New York: John Wiley \& Sons,.

\bibitem[Lee, 1993]{lee1993generating}
Lee, A. (1993).
\newblock {Generating Random Binary Deviates Having Fixed Marginal
  Distributions and Specified Degrees of Association}.
\newblock {\em The American Statistician}, 47(3).

\bibitem[Leisch et~al., 1998]{leisch1998generation}
Leisch, F., Weingessel, A., and Hornik, K. (1998).
\newblock {On the generation of correlated artificial binary data}.
\newblock Technical report, WU Vienna University of Economics and Business.

\bibitem[Mikosch, 2006]{mikosch2006copulas}
Mikosch, T. (2006).
\newblock {Copulas: Tales and facts}.
\newblock {\em Extremes}, 9(1):3--20.

\bibitem[Nelsen, 2006]{nelsen2006introduction}
Nelsen, R. (2006).
\newblock {\em {An introduction to copulas}}.
\newblock Springer Verlag.

\bibitem[Park et~al., 1996]{park1996simple}
Park, C., Park, T., and Shin, D. (1996).
\newblock {A simple method for generating correlated binary variates}.
\newblock {\em The American Statistician}, 50(4).

\bibitem[Rubinstein, 1997]{Rub:CE1}
Rubinstein, R.~Y. (1997).
\newblock {Optimization of computer simulation models with rare events}.
\newblock {\em European Journal of Operations Research}, 99:89--112.

\bibitem[Streitberg, 1990]{streitberg1990lancaster}
Streitberg, B. (1990).
\newblock {Lancaster interactions revisited}.
\newblock {\em The Annals of Statistics}, 18(4):1878--1885.

\bibitem[Walker, 1977]{walker1977efficient}
Walker, A. (1977).
\newblock {An efficient method for generating discrete random variables with
  general distributions}.
\newblock {\em ACM Transactions on Mathematical Software}, 3(3):256.

\end{thebibliography}

%


%

\onecolumn
\section{Appendix}

\begin{proof}[Proof Proposition \ref{prop:bahadur}]
\label{proof:bahadur}
Recall that $\mathcal{I}=2^D$ and $v_I(\v\gamma)=\prod_{i\in I}\lbrack(\gamma_i-m_i)/\sqrt{m_i(1-m_i)}\rbrack$ with $m_i>0$ for all $i\in D$. We define an inner product
\begin{equation*}
\textstyle
(f,g)\eqdef\evplus{q^\Prod_{\v m}}{f(\v\gamma)g(\v\gamma)}=\sum_{\v\gamma\in\B^d}f(\v\gamma)g(\v\gamma)\prod_{i\in D}m_i^{\gamma_i}(1-m_i)^{1-\gamma_i}
\end{equation*}
on the vector space of real-valued functions on $\B^d$. The set $S=\set{ v_I(\v\gamma)\mid I\in\mathcal{I}}$ is orthonormal, since
\begin{align*}
(v_I,r_J)
&=\prod_{i\in I\cap J}\mathbb E_{q^\Prod_{\v m}} \Bigg( \frac{(\gamma_i-m_i)^2}{m_i(1-m_i)} \Bigg)
 \prod_{i\in (I\cup J)\setminus(I\cap J)} \mathbb E_{q^\Prod_{\v m}} \Bigg( \frac{\gamma_i-m_i}{\sqrt{m_i(1-m_i)}} \Bigg) \\
&=
\begin{cases}
0 & \text{for } I\neq J \\
1 & \text{for } I=J,
\end{cases}
\end{align*}
There are $2^d-1$ elements in $S$ and $q^\Prod_{\v m}(\v\gamma)>0$ which implies that $S\cup\set 1$ is an orthonormal basis of the real-valued function on $\B^d$. It follows that each function $f\colon\B^d\to\R$ has exactly one representation as linear combination of functions in $S\cup\set 1$ which is $f=(f,1)+\sum_{I\in\mathcal I}v_I(f,v_I)$. Since
\begin{equation*}
\textstyle
(\pi/q^\Prod_{\v m},v_I)
=\sum_{\v\gamma\in \B^d} (\pi(\v\gamma)/q^\Prod_{\v m)(\v\gamma)}v_I(\v\gamma)q^\Prod_{\v m}(\v\gamma)
=\evplus{\pi}{v_I(\v\gamma)}=c_I,
\end{equation*}
we obtain $\pi(\v\gamma)/q^\Prod_{\v m}(\v\gamma)=\textstyle1+\sum_{I\in\mathcal{I}} v_I(\v\gamma) \ c_I$ for $f=\pi/q^\Prod_{\v m}$ which concludes the proof.
\end{proof}

%


%
\begin{proof}[Proof Proposition \ref{prop:cross moments}]
\label{proof:cross moments}
We first derive two auxiliary results to structure the proof.
\begin{proof}[Lemma 1]
For a set $I\subseteq D$ of indices it holds that
\begin{equation*}
\sum_{\v\gamma\in\B^d} \prod_{k\in I\cup\lbrace i,j\rbrace} \gamma_k\ =2^{d-\abs{I}-2+\ind_{I}(i)+\ind_{I\cup\lbrace i\rbrace}(j)}.
\end{equation*}
For an index set $M\subseteq D$, we have the sum formula $\sum_{\v\gamma\in\B^d} \prod_{k\in M} \gamma_k=2^{d-\abs{M}}$. If we have an empty set $M=\emptyset$ the sum equals $2^d$ and each time we add a new index $i\in D\setminus M$ to $M$ half of the addends vanish. The number of elements in $M=I\cup\lbrace i,j\rbrace$ is the number of elements in $I$ plus one if $i\notin I$ and again plus one if $i\neq j$ and $j\notin I$. Written using indicator function, we have $
\abs{I\cup\lbrace i,j\rbrace}
=\abs{I}+\ind_{D\setminus I}(i)+\ind_{D\setminus (I \cup\lbrace i\rbrace)}(j)
=\abs{I}+2-\ind_{I}(i)-\ind_{I\cup\lbrace i\rbrace}(j)$ which implies Lemma 1.
\end{proof}

\begin{proof}[Lemma 2]
\begin{equation*}
\sum_{i\in D} \sum_{j\in D} 2^{\ind_{I}(i)+\ind_{I\cup\lbrace i\rbrace}(j)}\ a_{i,j}=
\v 1\t \m A \v 1 + \tr{\m A} + \sum_{i\in I} \left\lbrack 2\sum_{j\in D} a_{i,j}+\sum_{j\in I\setminus\set i} a_{i,j} \right\rbrack
\end{equation*}
Straightforward calculations:
\begin{align*}
2^{\ind_{I}(i)+\ind_{I\cup\set i}(j)}
&=
(1+\ind_{I}(i))(1+\ind_{I\cup\set i}(j)) \\
&=
(1+\ind_{I}(i))(1+\ind_{I}(j)+\ind_{\set i}(j)-\ind_{I\cap\set i}(j))
\\
&=
1+\ind_{I}(i)
+\ind_{I}(j)+\ind_{I}(i)\ind_{I}(j) \\
&\qquad+\ind_{\set i}(j)+\ind_{I}(i)\ind_{\lbrace i \rbrace}(j)
-\ind_{I\cap\lbrace i\rbrace}(j)-\ind_{I}(i)\ind_{I\cap \lbrace i \rbrace}(j)
\\
&=
1+\ind_{\lbrace i\rbrace}(j)+\ind_{I}(i)+\ind_{I}(j)+\ind_{I\times I}(i,j)-\ind_{I\cap\lbrace i\rbrace}(j),
\end{align*}
where we used the identity
\begin{equation*}
\ind_{I}(i)\ind_{\lbrace i \rbrace}(j)
=\ind_{I}(i)\ind_{I}(i)\ind_{\lbrace i \rbrace}(j)
=\ind_{I}(i)\ind_{I}(j)\ind_{\lbrace i \rbrace}(j)
=\ind_{I}(i)\ind_{I\cap \lbrace i \rbrace}(j) 
\end{equation*}
in the second line. Thus, we have
\begin{align*}
&\ \sum_{i\in D} \sum_{j\in D} 2^{\ind_{I}(i)+\ind_{I\cup\lbrace i\rbrace}(j)}\ a_{i,j} \\
=&\
\sum_{i\in D} \sum_{j\in D} \left(
1+\ind_{\lbrace i\rbrace}(j)+\ind_{I}(i)+\ind_{I}(j)+\ind_{I\times I}(i,j)-\ind_{I\cap\lbrace i\rbrace}(j)
\right)\ a_{i,j} \\
=&\
 \sum_{i\in D}\sum_{j\in D} a_{i,j}
+\sum_{j\in D} a_{j,j}
+\sum_{i\in I}\sum_{j\in D} a_{i,j}
+\sum_{i\in D} \sum_{j\in I} a_{i,j}
+\sum_{i\in I}\sum_{j\in I} a_{i,j}
-\sum_{i\in I} a_{j,j} \\
=&\
\v 1\t \m A \v 1 + \tr{\m A} +
\sum_{k\in I}
\left\lbrack 2\sum_{l\in D} a_{k,l}+\sum_{l\in I} a_{k,l}-a_{k,k} \right\rbrack \\
=&\
\v 1\t \m A \v 1 + \tr{\m A} +
\sum_{k\in I}
\left\lbrack 2\sum_{l\in D} a_{k,l}+\sum_{l\in I\setminus\set k} a_{k,l} \right\rbrack
\end{align*}
The last line is the assertion of Lemma 2.
\end{proof}

\noindent
Using the two Lemmata above, we find a convenient expression for the cross-moment
\begin{align*}
m_I
&=
\sum_{\v\gamma\in\B^d} (\prod_{k\in I} \gamma_k)\ \mu(a_0+\v\gamma\t\m A \v\gamma)
&\\
&=
\mu\left\lbrack
\sum_{\v\gamma\in\B^d} a_0 + \sum_{\v\gamma\in\B^d} (\prod_{k\in I} \gamma_k) \sum_{i\in D} \sum_{j\in D} \gamma_i\gamma_j\ a_{i,j}
\right\rbrack
&\\
&=
\mu\left\lbrack
2^{d-\abs I}a_0+
\sum_{i\in D} \sum_{j\in D} a_{i,j}\ \sum_{\v\gamma\in\B^d} (\prod_{k\in I\cup\lbrace i,j\rbrace} \gamma_k)
\right\rbrack
\text{(Lemma 1)}\\
&=
\mu\left\lbrack
2^{d-\abs I}a_0+\,
\sum_{i\in D} \sum_{j\in D} 2^{d-\abs{I\cup\lbrace i,j\rbrace}}\ a_{i,j}
\right\rbrack
&\\
&=
\mu 2^{d-\abs{I}-2}
\left\lbrack 4a_0+
\sum_{i\in D} \sum_{j\in D} 2^{\ind_{I}(i)+\ind_{I\cup\lbrace i\rbrace}(j)}\ a_{i,j}
\right\rbrack
\text{(Lemma 2)}\\
&=
\mu 2^{d-\abs{I}-2}\left\lbrack 4a_0+
\v 1\t \m A \v 1 + \tr{\m A} +
\sum_{i\in I}
\left\lbrack 2\sum_{j\in D} a_{i,j}+\sum_{j\in I\setminus\set i} a_{i,j} \right\rbrack
\right\rbrack
\end{align*}
Since $m_\emptyset=1$ by definition, we the normalizing constant is
\begin{equation*}
\mu=2^{-d+2}\left(4a_0+\v 1\t \m A \v 1 + \tr{\m A}\right)^{-1},
\end{equation*}
which allows us to write down the normalized cross-moments
\begin{align*}
m_I=\frac{1}{2^{\abs I}}+
\frac{
\sum_{i\in I} \left\lbrack 2\sum_{j\in D} a_{i,j}+\sum_{j\in I\setminus\set i} a_{i,j} \right\rbrack
}{
2^{\abs I}(4a_0+\v 1\t \m A \v 1 + \tr{\m A})
}.
\end{align*}
The proof is complete.
\end{proof}

%


%
\begin{proof}[Proof Proposition \ref{prop:marginals}]
\label{proof:marginals}
We margin out the last component $d$. Let $I=\set{1,\dots,d-t}$,
\begin{align*}
q_{\m A, a_0}^{(d-1)}(\v\gamma_I)\mu^{-1}
&=
\left( q_{\m A, a_0}^{(d)}(\v\gamma_I,1)+q_{\m A, a_0}^{(d)}(\v\gamma_I,0) \right)\mu^{-1} \\
&=
2a_0+(\v\gamma_I,1)\t \m A (\v\gamma_I,1) + (\v\gamma_I,0)\t \m A (\v\gamma_I,0) \\
&=
2a_0+ \tr{\m A \left\lbrack (\v\gamma_I,1)(\v\gamma_I,1)\t + (\v\gamma_I,0)(\v\gamma_I,0)\t \right\rbrack} \\
&=
2a_0 + \tr{\m A
\begin{bmatrix}
2 \v\gamma_I\v\gamma_{I}\t & \v\gamma_{I} \\
  \v\gamma_I\t             & 1
\end{bmatrix}
}
\end{align*}
Iterating the argument, we obtain for $I=\set{1,\dots,d-t}$ and $I^c=D\setminus I$
\begin{align*}
q_{\m A, a_0}^{(d-t)}(\v\gamma_I)\mu^{-1}
&=2^ta_0+2^{t-2}\,
\tr{\m A
\begin{bmatrix}
 4\, \v\gamma_I\v\gamma_I\t & 2\, \v\gamma_I\v 1_{t}\t \\
 2\, \v 1_{t}\v\gamma_I\t   & \v1_{t}\v1_{t}\t+\m I_{t}
\end{bmatrix}
}
\end{align*}
Straightforward calculations:
\begin{align*}
&\ \tr{\m A
\begin{bmatrix}
 4\, \v\gamma_I\v\gamma_I\t & 2\, \v\gamma_I\v 1_{t}\t \\
 2\, \v 1_{t}\v\gamma_I\t   & \v1_{t}\v1_{t}\t+\m I_{t}
\end{bmatrix}} \\
=&\ \tr{
\m A
\left\lbrack
(2\,\v\gamma_I,\v 1_{t})(2\,\v\gamma_I,\v 1_{t})\t +
\diag{\v 0_I,\v 1_{t}}
\right\rbrack}
\\
=&\
\left\lbrack
(2\,\v\gamma_I,\v 1_{t})\t\m A(2\,\v\gamma_I,\v 1_{t}) + \tr{\m A \diag{\v 0_I,\v 1_{t})}}
\right\rbrack
\\
=&\
\left\lbrack
4\sum_{i\in I}\sum_{j\in I} \gamma_i\gamma_j a_{i,j}
+ 4 \sum_{i\in I}\sum_{j\in I^c} \gamma_i a_{i,j}
+ \sum_{i\in I^c} \sum_{j\in I^c} a_{i,j}
+ \sum_{i\in I^c} a_{i,i}
\right\rbrack
\\[.5em]
=&\
\left\lbrack
4\sum_{i\in I}\gamma_i(\sum_{j\in I} \gamma_j a_{i,j}
+ \sum_{j\in I^c} a_{i,j})
+ \sum_{i\in I^c} \sum_{j\in I^c} a_{i,j}
+ \sum_{i\in I^c} a_{i,i}
\right\rbrack
\end{align*}
The proof is complete.
\end{proof}

%


%
\begin{proof}[Proof Proposition \ref{prop:approx marginals}]
\label{proof:approx marginals}
For convenience of notation, let $\v\gamma_{-}=(\gamma_1,\dots,\gamma_{d-1})$. Note that $q_{\m A}(\v\gamma)=\mu\exp(\v\gamma_{-}\t\m A' \v\gamma_{-}+\gamma_d(2\v b\t\v\gamma_{-}+c))$. The marginal distribution is therefore
\begin{align*}
\pi(\v\gamma_{-})
&=\mu\,\exp(\v\gamma_{-}\t\m A'\v\gamma_{-})\left(1+\exp(2\v\gamma_{-}\t\v b+c)\right)\\
&=\mu\,\exp\left(\v\gamma_{-}\t\m A'\v\gamma_{-}+\v\gamma_{-}\t\v b+\frac{c}{2}\right)
\left(\exp(-\v\gamma_{-}\t\v b-\frac{c}{2})+\exp(\v\gamma_{-}\t\v b+\frac{c}{2})\right) \\
&=\mu\,\exp\left(\v\gamma_{-}\t\m A'\v\gamma_{-}+\v\gamma_{-}\t\v b+\frac{c}{2}\right)\,2\cosh\left(\v\gamma_{-}\t\v b+\frac{c}{2}\right).
\end{align*}
The marginal log mass function is thus
\begin{equation*}
\log \pi(\v\gamma_{-})=
\log(2\mu)+\frac{c}{2}+\v\gamma_{-}\t\m A'\v\gamma_{-}+\v\gamma_{-}\t\v b+\log\cosh\left(\v\gamma_{-}\t\v b+\frac{c}{2}\right).
\end{equation*} 
For $\log\cosh$ we can use a Taylor approximation
\begin{align*}
\log\cosh(\v\gamma_{-}\t\v b+\frac{c}{2})\approx
\log\cosh(\frac{c}{2})+
\v\gamma_{-}\t\v b\,\tanh(\frac{c}{2})+
\frac{1}{2}\,(\v\gamma_{-}\t\v b)^2\,\mathrm{sech}^2(\frac{c}{2})
\end{align*} 
to obtain
\begin{align*}
\log \pi(\v\gamma_{-})
&\approx
\log(2\mu\cosh(\frac{c}{2}))+\frac{c}{2}+\v\gamma_{-}\t\m A'\v\gamma_{-} \\
&\qquad +\big(1+\tanh(\frac{c}{2})\big)\v\gamma_{-}\t\v b+ \frac{1}{2}\,\mathrm{sech}^2(\frac{c}{2})(\v\gamma_{-}\t\v b)^2
\end{align*}
Since $\v\gamma_{-}$ is a binary vector, we have $\v\gamma_{-}\t\v b=\v\gamma_{-}\t\diag{\v b}\v\gamma_{-}$ and can thus rewrite the inner products as
\begin{align*}
\v\gamma_{-}\t\m A'\v\gamma_{-}+\gamma_{-}\t\v b+(\v\gamma_{-}\t\v b)^2
&=\tr{\m A'\v\gamma_{-}\v\gamma_{-}\t+\diag{\v b}\v\gamma_{-}\v\gamma_{-}\t+\v b \v b\t \v\gamma_{-}\v\gamma_{-}\t} \\
&=\v\gamma_{-}\t(\m A'+\diag{\v b}+\v b \v b\t) \v\gamma_{-}.
\end{align*} 
We let denote
\begin{equation*}
\mu^*=2\mu\cosh(\frac{c}{2})\exp(\frac{c}{2})=\mu (\exp(-\frac{c}{2})+\exp(\frac{c}{2}))\exp(\frac{c}{2})=\mu\,(1+\exp(c))
\end{equation*}
and
\begin{equation*}
\m A^*=
\m A'
+\left(1+\tanh(\frac{c}{2})\right)\diag{\v b}
+\frac{1}{2}\,\mathrm{sech}^2(\frac{c}{2})\v b \v b\t \\[.5em]
\end{equation*}
to form the approximation $\pi(\v\gamma_{-})\approx\mu^*\,\exp(\v\gamma_{-}\t\m A^*\v\gamma_{-})$ which completes the proof.\\
\end{proof}

%


%
\begin{proof}[Proof Proposition \ref{prop:poisson family}]
\label{proof:poisson family}
Straightforward calculations using an inclusion-exclusion argument for the union of events:
\begin{align*}
q_{(\mathcal S,\lambda)}(\v\gamma)
&=\sum_{\v v \in \tau^{-1}(\v\gamma)}h_{\lambda}(\v v)
\textstyle =\probplus{h_{\lambda}}{\cap_{i\in D}\set{\ind_{\set 0} \sum_{k\in S_i} v_k=\gamma_i}} \\
&\textstyle =\probplus{h_{\lambda}}{
\cap_{i\in D_1} \cap_{k\in S_i} \set{v_k=0},\ 
\cap_{i\in D_0} \cup_{k\in S_i} \set{v_k>0}} \\ 
&=
\probplus{h_{\lambda}}{\cap_{i\in D_1} \cap_{k\in S_i} \set{v_k=0}}
\probplus{h_{\lambda}}{\cap_{i\in D_0} \cup_{k\in S_i\setminus \cup_{j\in D_1}S_j} \set{v_k>0}} \\ 
&=
\probplus{q_{(\mathcal S,\lambda)}}{\v\gamma_{D_1}=\v 1}
\left(
1-\probplus{h_{\lambda}}{\cup_{i\in D_0} \cap_{k\in S_i\setminus \cup_{j\in D_1}S_j} \set{v_k=0}}
\right) \\ 
&= m_{D_0}\left\lbrack1- \sum_{t=1}^{\abs{D_0}}(-1)^{t-1} \sum_{I\subseteq \mathcal I_t}
\probplus{h_{\lambda}}{\cap_{i\in I} \cap_{k\in S_i\setminus \cup_{j\in D_1}S_j} \set{v_k=0}}\right\rbrack \\
&= m_{D_0}\left\lbrack1- \sum_{t=1}^{\abs{D_0}}(-1)^{t-1} \sum_{I\subseteq \mathcal I_t}
\exp\left(\sum_{k\in \cap_{i\in I} S_i\setminus \cup_{j\in D_1}S_j} -\lambda_k\right) \right\rbrack.
\end{align*}
The proof is complete.
\end{proof}

\end{document}